
\documentclass[a4paper,11pt]{amsart}
\usepackage{amssymb}
\usepackage{amsmath}
\usepackage{pgf,tikz}
\usepackage{mathrsfs}
\usepackage{pgf,xcolor}
\usetikzlibrary{arrows}
\usepackage{a4wide}
\linespread{1.2}
\vfuzz2pt 
\hfuzz2pt 


 \DeclareMathOperator{\tr}{tr}

\newcommand{\e}{\mathrm{e}}
 
\DeclareMathOperator{\Img}{Im}
\DeclareMathOperator{\Span}{span}
\DeclareMathOperator{\Hom}{Hom}
\DeclareMathOperator{\U}{U} 
\DeclareMathOperator{\rank}{rank}

\newcommand{\CP}{{\mathbb C}P}
\newcommand{\C}{{\mathbb C}}

\newtheorem{theorem}{Theorem}[section]

\newtheorem{lemma}[theorem]{Lemma}

\newtheorem{definition}[theorem]{Definition}
\newtheorem{proposition/definition}[theorem]{Proposition/Definition}

\theoremstyle{definition}

\newtheorem{remark}[theorem]{Remark}

\usepackage{pdfpages}
\usepackage[all]{xy}
\usepackage{todonotes}
\usepackage{hyperref}
\usepackage{tikz-cd}
\usepackage{tikz}



\title[Classification of primitive immersions of constant curvature]
{Classification of primitive immmersions of constant curvature into flag manifolds}
\author[R. Pacheco]{Rui Pacheco}
\address{Centro de Matem\'{a}tica e Aplica\c{c}{\~{o}}es (CMA-UBI), Universidade da Beira Interior, 6201 -- 001
	Covilh{\~{a}}, Portugal.}
\email{rpacheco@ubi.pt, mehmood.ur.rehman@ubi.pt}

\author[M. U. Rehman]{Mehmood-Ur-Rehman}

\thanks{The first author was partially supported by Funda\c{c}\~{a}o para a Ci\^{e}ncia e Tecnologia through the project UID/MAT/00212/2019. The second author was partially supported by Funda\c{c}\~{a}o para a Ci\^{e}ncia e Tecnologia through the grant UI/BD/153058/2022.}

\keywords{harmonic map, minimal immersion, Grassmannian manifold, Riemann surface, flag manifold, primitive map, curvature, Veronese map}
\subjclass[2010]{53C42,  53A10, 53C35, 58E20}

\begin{document}

\maketitle

\begin{abstract}
We classify primitive minimal immersions of constant curvature from the two-sphere $S^2$ into the  low-dimensional flag manifolds $F_{2,1,1}$ and $F_{2,2,1}$. 
\end{abstract}

\section{Introduction}
It is well known result due to E. Calabi  which states that any complex submanifold with constant holomorphic sectional curvature of the complex projective space $\mathbb{C}P^{n}$ is unitarily congruent to a piece of a \emph{round Veronese embedding}  (see \cite{LawsonCalabi} and the references therein). 
In 1988, J. Bolton et al. \cite{Bolton} extended this result  by proving that any conformal minimal immersion of constant curvature from the two-sphere $S^2$ into a complex projective space, up to unitary congruence, belongs to a \emph{Veronese sequence}.
As a generalization of the $\CP^n$ case, the most natural is to consider the same problem for  complex Grassmannians $G_k(\C^n)$. Several researchers have since contributed to this classification problem.    Z. Li and Z.-H. Yu \cite{Li99} provided a complete classification  of minimal two-sphere of constant curvature into $G_2(\C^4)$. Later, X. Jiao and his collaborators classified  various types of constant curvature minimal two-spheres in $G_2(\C^5)$, including nonsingular holomorphic curves, totally unramified holomorphic curves, and those which are neither holomorphic nor antiholomorphic \cite{HeJiao2014,Jiao2004,Jiao2011,JiaoXu2018}. More recently, in \cite{ChiXieXU} the authors constructed explicit examples of constantly curved holomorphic two-spheres of degree 6 in $G_2(\C^5)$, but a complete classification seems still missing.



For isometric immersions, recall that minimality  is equivalent to harmonicity  \cite{eells-sampson,urakawa}, hence the rich theory of harmonic maps of Riemann surfaces  come into play. 
We are particulary interested in certain twistorial constructions of harmonic maps  from Riemann surfaces into symmetric spaces  \cite{burstall,burstallrawnsley}.  An important class of twistor lifts   is that of \emph{primitive maps} into $k$-symmetric spaces $G/K$ \cite{burstall,Guest}. For $k>2$, primitive maps are harmonic with respect to all $G$-invariant metrics.
Thus, a natural problem is to classify constantly curved primitive (minimal) immersions of surfaces into $k$-symmetric spaces, when these are  equipped with  $G$-invariant metrics. 
In \cite{PM} we addressed this problem in the case of  primitive maps $\Psi=(\psi_0,\ldots, \psi_p)$ into the flag manifold 
\begin{equation}\label{homspace}
F_{k_0,\ldots,k_p}=\frac{\U(n)}{\U(k_0)\times\ldots \times \U(k_p)}
\end{equation}
arising from the harmonic sequences $\psi_0,\ldots,\psi_p$   associated to holomorphic maps $\psi_0$ from $S^2$ into the  complex Grassmannian $G_{k_0}(\C^n)$. As a corollary to our results, we showed there that \emph{any full primitive immersion from $S^2$ into the full flag manifold $F_{1,\ldots,1}$ which has constant curvature with respect to at least one  $\U(n)$-invariant metric is unitarily equivalent to the primitive lift of a Veronese map.} 

In the present paper we give the complete classification of primitive (minimal) immersions of constant curvature from $S^2$ into the low-dimensional flags $F_{2,1,1}$ (Section \ref {F211}) and $F_{2,2,1}$ (Section \ref {F221}).    Our methods involve harmonic sequences and harmonic diagrams, as developed in \cite{burstall-wood,PW1}, and a generalization of the notion of absolute value type function \cite{Eschenburg,Li99,JiaoXu2018}, which we called \emph{generalized absolute value type function}.

\section{Preliminaries}

	\subsection{Harmonic sequences and primitive immersions}
		We consider on $\C^{n}$ the standard Hermitian inner product  
$\left<  v, w \right>=v_1\overline{w}_1+\ldots+v_n\overline{w}_n.$
The Grassmannian $G_k(\C^{n})$ of all $k$-dimensional complex subspaces of $\C^{n}$ is a Hermitian symmetric $\U(n)$-space,  with stabilizers conjugate to $\U(k)\times \U(n-k)$.   
The complex structure is given by  
\begin{equation}\label{eq:complexstructure}
	T_L^\C G_k(\C^n)=T_L^{1,0}G_k(\C^n)\oplus T_L^{0,1}G_k(\C^n) \cong \Hom(L,L^\perp)\oplus \Hom(L^\perp,L),
\end{equation}
at $L\in G_k(\C^n)$,
while the compatible Riemann metric is given by
\begin{equation}\label{hmetric}
	h(\xi,\eta)=\frac12\tr (\xi \eta),\qquad \xi,\eta\in T_L  G_k(\C^n).
\end{equation} 
Given a  Riemann surface $M$, we interpret  any smooth map $\psi:M \to G_k(\C^n)$  as a (complex) rank-$k$ vector subbundle, also denoted by $\psi$,  of the trivial vector  bundle $M\times \C^{n}$, with fibre at $z \in M$ given by $\psi(z)$. For the theory of harmonic maps from surfaces into  complex Grassmanians, we refer the reader to \cite{burstall-wood,PW1}.

Let $\psi:M\to G_k(\C^n)$ be a harmonic map and $\{\psi_j\}_{j\in\mathbb{Z}}$ be its harmonic sequence, where: $\psi_0=\psi$; for $j>0$, $\psi_j$ is the $j$-th $\partial'$-Gauss bundle of $\psi$;  for $j<0$, $\psi_j$ is the $j$-th $\partial''$-Gauss bundle of $\psi$ . Recall that, if $\psi$ is holomorphic, then its harmonic sequence must take the form
$$\{0\}=\psi_{-1},\psi_0,\psi_1,\dots,\psi_p,\psi_{p+1}=\{0\},\quad \mbox{with $\psi_i\perp \psi_j$ for all $i\neq j\in\{0,\ldots, p\}$}.$$
 
 Take a local complex coordinate $z$ on $U\subset M$. According to  \eqref{eq:complexstructure}, we have, for each $j\in\mathbb{Z}$,
$$d\psi_j\big(\tfrac{\partial}{\partial z}\big)^{1,0}=A'_{\psi_j},\quad d\psi_j\big(\tfrac{\partial}{\partial z}\big)^{0,1}=-A'_{\psi_{j-1}},$$
where $A'_{\psi_j}$ stands for the $\partial'$-second fundamental form of $\psi_j$.  Assume that $\psi$ has isotropy order $r\geq 2$. Then it is clear that $A'_{\psi_j^\perp}\circ A'_{\psi_j}=0$ for all $j$; hence, each $\psi_j$ is  a branched  conformal minimal immersion.  Set 
\begin{equation}\label{gammaj}
	\gamma_j= \tr  A'_{\psi_j}(A'_{\psi_j})^*.
\end{equation} 
The  metric induced from \eqref{hmetric} by  $\psi_j$ on $M$  is  locally given by 
\begin{equation}\label{metricgamma}
	\psi_j^*h=(\gamma_{j-1}+\gamma_j) dzd\bar z.
\end{equation}
Then, the Laplacian $\Delta_j$ of the metric $\psi_j^*h$ and the  corresponding curvature  are given by
\begin{equation}\label{curvaturepsi}
\Delta_j=\frac{4}{\gamma_{j-1}+\gamma_j}\partial_z \partial_{\bar{z}},\quad 	K(\psi_j)=-\frac{2}{\gamma_{j-1}+\gamma_j} \partial_z \partial_{\bar{z}}\log (\gamma_{j-1}+\gamma_j).
\end{equation}
Here and henceforth we denote $\partial_z:=\frac{\partial}{\partial z}=\frac12(\frac{\partial}{\partial x}-i \frac{\partial}{\partial y})$ and $ \partial_{\bar z}:=\frac{\partial}{\partial\bar{z}}=\frac12(\frac{\partial}{\partial x}+i \frac{\partial}{\partial y}).$

For a holomorphic immersion $\sigma:M\to \CP^n$, the induced metric is locally given by $\sigma^*h=\gamma dzd\bar z$, with
\begin{equation}\label{metricholocp}
	\gamma=\partial_z\partial_{\bar z}\log \|\hat\sigma \|^2,
	\end{equation}
where $\hat\sigma:U\subset M\to \C^{n+1}$ is a local nonvanishing holomorphic section of $\sigma$.  
\subsection{The Veronese sequence}
Recall that the $n$-\emph{Veronese map} $V^n:S^2\to \C P^n$ is the linearly full holomorphic immersion of constant curvature given by
$$V^n(z) =\left[1, \sqrt{n \choose 1}  z, \ldots, \sqrt{n \choose r} z^r, \ldots, z^n\right],
	$$in terms of the canonical complex coordinate $z$ on $\C =S^2\setminus\{\infty\}.$
Up to unitary congruence, the $n$-Veronese map is the unique such immersion, as shown by E. Calabi (see \cite{LawsonCalabi} and the references therein).  The harmonic sequence $V_0^n, \ldots, V_n^n$  associated to $V^n=V^n_0$ is called the \emph{$n$-Veronese sequence}.

\begin{theorem}\label{veronesebolton} \cite[Theorem 5.2]{Bolton}
	The Veronese sequence $V_0^n,\ldots, V^n_n :S^2 \rightarrow \mathbb{C}P^n$ is given by
	$V_j^n=\big[\hat V_j^n \big]$, with $\hat V_j^n=( f_{j,0},\ldots, f_{j,n}),$ where
	\begin{equation}\label{veronesesection}
		f_{j,r}(z)= \frac{j!}{(1+z\bar{z})^j}  \sqrt{n \choose r}  z^{r-j} \sum_{k} (-1)^k   {r \choose j-k}  {n-r \choose k} 
		(z\bar{z})^k.
	\end{equation}
	Moreover, each $V_j^n$ is a minimal immersion with induced metric
	\begin{equation}\label{metric for veronese}
		{V_j^n}^*h=(\gamma_{j-1}+\gamma_j)dzd\bar z=\frac{n+2j(n-j)}{(1+z\bar{z})^2}dzd\bar{z},\quad \gamma^n_j=\frac{(j+1)(n-j)}{(1+z\bar z)^2},
	\end{equation} 
	and hence it has  constant curvature $K(V_j^n)= \frac{4}{n+2j(n-j)}$. 
\end{theorem}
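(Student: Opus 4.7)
The plan is to establish the explicit formula for the Veronese sequence by induction on $j$, using the $\partial'$-Gauss bundle construction, and then to read off $\gamma_j$, the induced metric, and the curvature from closed-form squared norms of the sections $\hat V_j^n$.

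For the base case $j=0$, only the $k=0$ term survives in \eqref{veronesesection}, giving $f_{0,r}(z)=\sqrt{\binom{n}{r}}z^r$, so $\hat V_0^n$ is indeed the standard lift of $V^n$. For the inductive step, a lift of the $\partial'$-Gauss bundle $V_{j+1}^n$ is given by $\pi_{V_{j+1}^n}(\partial_z \hat V_j^n)$. Since $V_i^n\perp V_\ell^n$ for $i\neq \ell$ (as $\psi_0=V^n$ is holomorphic), one has
\[
\pi_{V_{j+1}^n}(\partial_z \hat V_j^n)=\partial_z\hat V_j^n-\frac{\langle\partial_z\hat V_j^n,\hat V_j^n\rangle}{\|\hat V_j^n\|^2}\,\hat V_j^n.
\]
Differentiating \eqref{veronesesection} entrywise and applying Vandermonde's convolution together with Pascal's rule on the resulting alternating binomial sums should reproduce \eqref{veronesesection} with $j$ replaced by $j+1$. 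As a sanity check, for $j=0$ one finds $\langle\partial_z\hat V_0^n,\hat V_0^n\rangle=n\bar z(1+z\bar z)^{n-1}$ and the difference above simplifies to $\frac{\sqrt{\binom{n}{r}}z^{r-1}}{1+z\bar z}(r-(n-r)z\bar z)$, which is exactly the $j=1$ instance of \eqref{veronesesection}.

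Next, evaluating \eqref{veronesesection} at $z=0$, only the term $r=j$, $k=0$ survives, giving $\hat V_j^n(0)=j!\sqrt{\binom{n}{j}}\,e_j$. The $\mathrm{SU}(2)$-equivariance of the Veronese map (whose image is the orbit of a highest-weight vector of the irreducible $(n+1)$-dimensional $\mathrm{SU}(2)$-representation) then forces
\[
\|\hat V_j^n\|^2=(j!)^2\binom{n}{j}(1+z\bar z)^{n-2j};
\]
alternatively, this identity can be derived directly as a binomial sum from \eqref{veronesesection}. Since $\hat V_{j+1}^n$ is obtained as $\pi_{V_{j+1}^n}(\partial_z\hat V_j^n)$ and $V_j^n$ is a line subbundle, the trace defining $\gamma_j$ in \eqref{gammaj} reduces to $\gamma_j=\|\hat V_{j+1}^n\|^2/\|\hat V_j^n\|^2$, which with the closed-form norms collapses to
\[
\gamma_j=\frac{(j+1)^2\binom{n}{j+1}}{(j!)^2\binom{n}{j}\,(1+z\bar z)^2}=\frac{(j+1)(n-j)}{(1+z\bar z)^2}.
\]

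Summing $\gamma_{j-1}+\gamma_j$ yields \eqref{metric for veronese}, and applying \eqref{curvaturepsi} together with $\partial_z\partial_{\bar z}\log(1+z\bar z)^{-2}=-2(1+z\bar z)^{-2}$ gives $K(V_j^n)=4/(n+2j(n-j))$. The main obstacle is the inductive step: while conceptually straightforward, verifying that entrywise differentiation of \eqref{veronesesection} reproduces the same formula with $j$ shifted by one requires careful bookkeeping with the double sum over $r$ and $k$ of binomial coefficients. The representation-theoretic viewpoint—realising $\hat V_j^n$ as the action of a group element on a weight-$(n-2j)$ vector in the spin-$n/2$ irreducible $\mathrm{SU}(2)$-module—provides a cleaner conceptual alternative, directly yielding both the explicit formula and the norm identity and thus bypassing the combinatorial manipulations.
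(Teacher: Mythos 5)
This theorem is not proved in the paper at all: it is quoted verbatim from Bolton--Jensen--Rigoli--Woodward \cite[Theorem 5.2]{Bolton}, and the paper only records the consequences $\|\hat V_j^n\|^2=\frac{n!j!}{(n-j)!}(1+z\bar z)^{n-2j}$ and the recursion \eqref{sectionfarmoulaveronese}. Your reconstruction follows essentially the same route as that source: build $\hat V_{j+1}^n$ as the projection of $\partial_z\hat V_j^n$ onto $(V_j^n)^\perp$, verify \eqref{veronesesection} inductively by binomial manipulations, compute the closed-form norms, and read off $\gamma_j=\|\hat V_{j+1}^n\|^2/\|\hat V_j^n\|^2$, the induced metric and the curvature. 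All the pieces you actually carry out check out (the base case, the $j=1$ sanity check, the evaluation at $z=0$, the value of $\gamma_j$, and the curvature computation), so the proposal is correct in substance. Two small caveats: the general inductive step --- the double-sum binomial identity showing that the projection of $\partial_z\hat V_j^n$ reproduces \eqref{veronesesection} with $j$ replaced by $j+1$ --- is the real computational content and is only asserted (``should reproduce''), not carried out; and the intermediate expression $\frac{(j+1)^2\binom{n}{j+1}}{(j!)^2\binom{n}{j}(1+z\bar z)^2}$ for $\gamma_j$ has a transcription slip (the numerator should carry $((j+1)!)^2$ so that the $(j!)^2$ cancels), though the stated value $(j+1)(n-j)(1+z\bar z)^{-2}$ is correct. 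The $\mathrm{SU}(2)$-equivariance shortcut for the norm identity is a legitimate and cleaner alternative, provided one states precisely how the local lifts transform under the cocycle so that $\|\hat V_j^n\|^2$ is pinned down up to the constant fixed by evaluation at $z=0$.
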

From \eqref{veronesesection}, one obtains
$\|\hat V_j^n\|^2=\frac{n!j!}{(n-j)!}\left(1+z\bar{z}\right)^{n-2j}$;
hence (see \cite{Bolton})
\begin{equation}\label{sectionfarmoulaveronese}
	\hat V_{j+1}^n= \partial_z{\hat V_{j}^n}- \frac{\big\langle \partial_z \hat V_{j}^n,\hat V_{j}^n\big\rangle}{\big\|\hat V_{j}^n\big\|^2}\hat V_{j}^n= \partial_z\hat V_{j}^n-\frac{(n-2j)\bar z}{(1+z\bar z)}\hat V_{j}^n,\quad \mbox{for $0 \le j \le n$}.
\end{equation}
	\subsection{Primitive harmonic immersions } \label{sec:intro}
	 Let $F_{k_0,\ldots,k_p }$ be the flag manifold consisting of all $(p+1)$-tuples  $\Psi=(\psi_0,\dots, \psi_p)$ of mutually orthogonal complex subspaces of $\C^{n}$, so that $\C^n=\psi_0\oplus\ldots \oplus \psi_p$ and $k_j=\dim \psi_j$, for each $j\in\{0,\dots,p\}$. 
	As a homogeneous space,
this flag manifold is given by \eqref{homspace},	where the unitary group $\U(n)$ acts on $F_{k_0,\ldots,k_p }$ as  $g\Psi=(g\psi_0,\dots g\psi_p)$ for  $g\in \U(n)$.  
	We have the following identification:
	\begin{equation}\label{eq:TFdecomposition}
	T^\C_\Psi F_{k_0,\ldots,k_p }\cong \sum_{i\neq j}\Hom(\psi_i, \psi_{j}).
	\end{equation}
	We distinguish the subbundle $T^1$ of $T^\C F_{k_0,\ldots,k_p }$ whose fiber at $\Psi$ is given by 
	$$T^1_\Psi\cong \sum_{j\in\mathbb{Z}_{p+1}} \Hom(\psi_j, \psi_{j+1}).$$

A smooth map $\Psi=(\psi_0,\ldots,\psi_p)$ from a Riemann surface $M$ to $F_{k_0,\ldots,k_p }$
 is said to be \emph{primitive} \cite{burstall} if $d\Psi \big(\tfrac{\partial}{\partial z}  \big)$ is a local section of $\Psi^*T^1$, for all local complex coordinate $z$ on $M$. 
Taking $p=1$, then  $F_{k_0,k_1 }\cong G_{k_0}(\C^n)$, with $k_1=n-k_0$, and all smooth maps into $F_{k_0,k_1}$ are primitive. For $p>1$, if $\Psi=(\psi_0,\ldots,\psi_p)$ is primitive, then each $\psi_j:M\to G_{k_j}(\C^n)$ is harmonic \cite{burstall};  moreover, it is known \cite{black} that, for $p>1$, any primitive map into $F_{k_0,\ldots,k_p }$ is harmonic with respect to all $\U(n)$-invariant metrics of $F_{k_0,\ldots,k_p }$. Any such metric $g$ has the following form \cite{Andreas Arvanitoyeorgos}: given $\xi,\eta\in T_\Psi F_{k_0,\ldots,k_p }$, and writing $\xi=\sum\xi_{ij}$, $\eta=\sum\eta_{ij}$ according to \eqref{eq:TFdecomposition}, then 
\begin{equation}\label{invariantmetric}
g(\xi,\eta)=\sum_{i\neq j} \lambda_{ij}\tr \xi_{ij}\eta_{ij}^*,
\end{equation}
for some positive constants $\lambda_{ij}$ satisfying $\lambda_{ij}=\lambda_{ji}$.  Hence, while the Grassmannians admit a unique (up to multiplication by a positive constant) $\U(n)$-invariant metric, the flag $F_{k_0,\ldots,k_p }$ admits infinite nonequivalent $\U(n)$-invariant metrics. 

Henceforth we will assume $p>1$. Given a primitive immersion $\Psi=(\psi_0,\ldots,\psi_p):M\to F_{k_0,\ldots,k_p}$ and an invariant metric $g$ on  $F_{k_0,\ldots,k_p }$ of the form \eqref{invariantmetric}, then the metric on $M$ induced by $\Psi$ is given, in terms of a local complex coordinate $z$, by
\begin{equation}\label{eq:metric1}
	\Psi^*g=\sum_{j\in \mathbb{Z}_{p+1}}\lambda_j\gamma_jdzd\bar z,
\end{equation}
where, for each $j\in\mathbb{Z}_{p+1}$,  we denote $\lambda_j=\lambda_{j,j+1}$;  and $\gamma_j$ is given by \eqref{gammaj}. 
The corresponding curvature is given by
\begin{equation}\label{kpsi}
	K(\Psi)=-\frac{2}{\sum_{j\in\mathbb{Z}_{p+1}}  \lambda_j \gamma_j } 
	\partial_{z}{\partial_{\bar{z}}}\log \sum_{{j\in\mathbb{Z}_{p+1}}}  \lambda_j \gamma_j.
\end{equation}

 Let $\psi: M \to G_{k}(\C^n)$ be a harmonic map of isotropy order $\geq r$, and $\{\psi_j\}_{j\in \mathbb{Z}}$ be the harmonic sequence of $\psi$. Then the smooth map $\Psi=(\psi_0,\ldots,\psi_{r-1},R)$ into $F_{k_0,\ldots,k_{r-1}, k_R}$,
	with $R=\big(\bigoplus_{j=0}^{r-1}\psi_j\big)^\perp$, $k_j=\mathrm{rank}\,\psi_j$ and $ k_R=\mathrm{rank}\,R$,
	is primitive. 	We say that $\Psi$ is a \emph{primitive lift} of $\psi$. For example, the smooth map $\mathcal{V}^n=(V_0^n,\ldots , V_n^n)$ is a primitive immersion into the full flag manifold $F_{1,\ldots ,1}$. We call  $\mathcal{V}^n$ the \emph{$n$-Veronese primitive immersion.}

	 \subsection{Homogeneous projections and adding constants}\label{homadd}
	 Let $\psi_{0}:M \to G_{k_{0}}(\C^n)$ be a linearly full holomorphic map.  Consider its  harmonic sequence 
	 $\psi_{0},\ldots, \psi_p$ and the corresponding primitive lift
	 $\Psi=(\psi_{0},\ldots, \psi_p)$ into $F_{k_0,\ldots,k_p}.$
	 For a given choice of numbers $0\leq q_0<\ldots <q_{l-1} <q_{l}=p,$
	 define the \emph{homogeneous projection} 
	 $\Pi: F_{k_0,\ldots,k_p} \to F_{k^0,\ldots,k^l},$
	 by
	 $\Pi(\psi_{0},\ldots, \psi_p)= (\psi^0,\ldots, \psi^l),$ where, for each $j\in\{0,\ldots, l\}$,
	 setting $q_{-1}=-1$, 
	 \begin{equation}\label{psi^j}k^j=k_{q_{j-1}+1}+\ldots +  k_{q_{j}},\quad \psi^{j}= \psi_{q_{j-1}+1}\oplus\ldots\oplus\psi_{q_{j}}.\end{equation}
	 The following provides a generalization of \cite[Theorem 2.1]{PM}:

	 \begin{theorem} Let $\psi_0:M \to G_{k_0}(\C^n)$  be a linearly full holomorhic map, with harmonic sequence $\psi_0,\ldots, \psi_p$. 
	 	If the primitive lift $\Psi=(\psi_0,\ldots, \psi_p):M\to F_{k_0,\ldots,k_p }$ is an immersion and there exists at least one $\U(n)$-invariant metric on $F_{k_0,\ldots,k_p }$ with respect to which  $\Psi$ has constant curvature, then $\tilde \Psi= \Pi \circ \Psi:M \to F_{k^0,\ldots,k^l}$ is a  primitive immersion of constant curvature with respect to all   $\U(n)$-invariant metrics on $F_{k^0,\ldots,k^l}$. 
	 \end{theorem}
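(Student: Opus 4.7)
I would start with the primitivity and immersion claims, which follow by a direct chain-rule computation. Under the identification \eqref{eq:TFdecomposition}, primitivity of $\Psi$ says that $d\Psi(\partial_z)$ is a section of $\bigoplus_{s\in\mathbb{Z}_{p+1}}\Hom(\psi_s,\psi_{s+1})$, with its $s$-component equal to $A'_{\psi_s}$. Applying $d\Pi$ block by block, the components with $q_{i-1}<s<q_i$ stay inside the single block $\psi^i$ and are therefore killed by $d\Pi$, while the component with $s=q_i$ sends $\psi_{q_i}\subset\psi^i$ into $\psi_{q_i+1}\subset\psi^{i+1}$. Hence $d\tilde\Psi(\partial_z)\in\bigoplus_{i\in\mathbb{Z}_{l+1}}\Hom(\psi^i,\psi^{i+1})=\tilde\Psi^*T^1$, so $\tilde\Psi$ is primitive; moreover, the $(i,i+1)$-component of $d\tilde\Psi(\partial_z)$ equals $A'_{\psi_{q_i}}$, so $\tilde\gamma_i=\gamma_{q_i}$. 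Formula \eqref{eq:metric1} thus yields
\begin{equation*}
\tilde\Psi^*\tilde g=\sum_{i\in\mathbb{Z}_{l+1}}\tilde\lambda_i\,\gamma_{q_i}\,dzd\bar z,
\end{equation*}
which is positive off a discrete subset of $S^2$ by the linear fullness of $\psi_0$; hence $\tilde\Psi$ is a (branched) primitive immersion.

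The main task is the curvature claim. Setting $\rho=\sum_j\lambda_j\gamma_j$, the constant-curvature hypothesis on $\Psi$ reads as the Liouville equation $\partial_z\partial_{\bar z}\log\rho=-\tfrac{K}{2}\rho$ on $S^2$. Following the strategy of \cite[Theorem 2.1]{PM}, I would combine this with the Toda-type relations $\partial_z\partial_{\bar z}\log\gamma_j=\gamma_{j+1}-2\gamma_j+\gamma_{j-1}$ (valid along any holomorphic harmonic sequence, with boundary conditions $\gamma_{-1}=\gamma_p=0$) and with the classification on $S^2$ of positive solutions of Liouville's equation, to deduce the rigidity statement that each $\gamma_j$ is a nonnegative constant multiple $c_j\tau$ of a single positive function $\tau$ satisfying $\partial_z\partial_{\bar z}\log\tau=-\alpha\tau$ for some $\alpha>0$. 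Extracting this rigidity from a \emph{single} Liouville equation on the particular positive combination $\rho$, and propagating it through the Toda recursion to every index $j=0,\ldots,p-1$, is the main analytic obstacle; the generalized absolute value type machinery announced in the introduction appears tailored to this step.

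Granting the rigidity, the rest is immediate. For any positive constants $\tilde\lambda_i$ parametrizing a $\U(n)$-invariant metric $\tilde g$ on $F_{k^0,\ldots,k^l}$ via \eqref{invariantmetric}, one has $\sum_i\tilde\lambda_i\gamma_{q_i}=\bigl(\sum_i\tilde\lambda_i c_{q_i}\bigr)\tau$, and \eqref{kpsi} gives
\begin{equation*}
K(\tilde\Psi)=\frac{2\alpha}{\sum_i\tilde\lambda_i c_{q_i}},
\end{equation*}
which is constant on $S^2$. Hence $\tilde\Psi$ has constant curvature with respect to every $\U(n)$-invariant metric on $F_{k^0,\ldots,k^l}$.
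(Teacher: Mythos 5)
Your treatment of primitivity and of the identity $\tilde\gamma_i=\gamma_{q_i}$ is correct and essentially coincides with the paper's: the paper phrases it via $A'_{\psi^j}=A'_{\psi_{q_j}}\circ\pi_{q_j}$, you via $d\Pi$ killing the intra-block components, but the content is the same. The problem is the curvature step, which you explicitly leave open as ``the main analytic obstacle.'' The paper does not re-derive any rigidity of the $\gamma_j$ at all: it simply invokes \cite[Theorem 2.1]{PM}, which under exactly these hypotheses gives that each $\psi_j$, $j=0,\ldots,p-1$, is an immersion of constant curvature \emph{and constant K\"ahler angle}; from those two facts each $\gamma_j$ takes the form $\alpha_j(1+z\bar z)^{-2}$ with $\alpha_j>0$ constant, and the conclusion is then the one-line computation you give at the end. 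So the heart of your argument --- deducing from a single constant-curvature hypothesis on the combination $\rho=\sum_j\lambda_j\gamma_j$ that every individual $\gamma_j$ is a constant multiple of a common Liouville solution --- is precisely what is missing, and it is not a routine step.

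Moreover, the route you sketch for filling it has a concrete flaw: the Toda-type relation $\partial_z\partial_{\bar z}\log\gamma_j=\gamma_{j+1}-2\gamma_j+\gamma_{j-1}$ holds for harmonic sequences of \emph{line} bundles in $\CP^n$, but here $\gamma_j=\tr A'_{\psi_j}(A'_{\psi_j})^*$ with $\rank\psi_j=k_j$ possibly greater than $1$, and the trace of the second fundamental forms does not satisfy that identity in general (such equations hold for determinants/degrees of the Gauss bundles, not for traces). A secondary point: positivity of $\tilde\Psi^*\tilde g$ only ``off a discrete subset'' is weaker than the claimed immersion property; in the paper this too comes for free from $\alpha_j>0$ in the cited theorem. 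The correct repair is simply to quote \cite[Theorem 2.1]{PM} rather than to attempt to reprove it.
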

	 \begin{proof}
	 	For $\psi^j$ defined by \eqref{psi^j}, observe that the second fundamental form  $A'_{\psi^j}:\psi^j\to (\psi^j)^\perp$ is given by: 
	 $A'_{\psi^j}(v)=A'_{\psi_{q_{j}}}\circ \pi_{q_{j}}(v),$
	 where $\pi_{q_{j}}$ is the Hermitian projection onto $\psi_{q_j}$. 
	 Hence the image of  $A'_{\psi^j}$ is contained in $\psi_{q_j+1}\subset \psi^{j+1}$, which means that $\tilde \Psi= \Pi \circ \Psi$  defines a primitive map into $F_{k^0,\ldots,k^l}$. Moreover, in view of \eqref{gammaj}, we have
	 \begin{equation}\label{gamma^j}
	 	\tr  A'_{\psi^j}\left(A'_{\psi^j}\right)^*=\tr  A'_{\psi_{q_j}}\big(A'_{\psi_{q_j}}\big)^*=\gamma_{q_j}.\end{equation}

	 	By \cite[Theorem 2.1]{PM}, each $\psi_j$, with  $j \in \{0,\ldots,p-1\}$ is an immersion of constant curvature and constant K\"{a}hler angle.  This implies that
	 	each function $\gamma_{j}$, on a local chart $(U,z)$, takes the form
	 	$\gamma_{j}= \frac{\alpha_j}{(1+z\bar z)^2}$,
	 	for some positive constant $\alpha_j$.
	 	Now, take any $\U(n)$-invariant metric $g$ on $F_{k^0,\ldots,k^l}$. By \eqref{eq:metric1} and \eqref{gamma^j}, on a local complex chart $(U,z)$,
	 		$\tilde\Psi^*g=\sum_{j=0}^{l-1}\lambda_j \gamma_{q_j}dzd\bar z$,
	for some positive numbers $\lambda_j$, with $j\in\{0,\ldots,l-1\}$. 
	 	Then
	 	$$\tilde\Psi^*g= \sum_{j=0}^{l-1}\frac{\lambda_{j} \alpha_{q_j}}{(1+z\bar z)^2}dzd\bar z,$$
	 	which implies that $\tilde \Psi$ has constant curvature.
	 \end{proof}
	 
Given a primitive map $\Psi=(\psi_{0},\ldots, \psi_p)$  into $F_{k_0,\ldots ,k_p}$,	 
we describe three other processes of 
  obtaining new primitive maps from $\Psi$:
\begin{enumerate} \item \emph{Adding a constant.} The smooth map $\Psi\oplus_r \C^m=(\psi_{0},\ldots,\psi_r\oplus \C^m,\ldots, \psi_p)$ defines a primitive map into $F_{k_0,\ldots,k_r+m,\ldots ,k_p}$.
	 \item \emph{Adding a primitive map.} More generally, let $\Phi=(\varphi_{0},\ldots, \varphi_p)$ be another  primitive map into $F_{s_0,\ldots ,s_p}$. Then $\Psi\oplus \Phi= (\psi_0\oplus\varphi_{0},\ldots, \psi_p\oplus\varphi_p)$ is  primitive map into $F_{k_0+s_0,\ldots ,k_p+s_p}$.
	 \item \emph{Shifting.} $ \tau\Psi=(\psi_{p},\psi_0,\ldots, \psi_{p-1})$ defines a primitive map into $F_{k_p,k_0,\ldots ,k_{p-1}}$. 
\end{enumerate}
	 \subsection{Generealized absolute valued type functions}
	Recall  \cite{Eschenburg,Li99,JiaoXu2018} that  a nonnegative smooth function $f$ on a Riemann surface $M$ is said to be of \emph{absolute value type}, \cite{Eschenburg,Li99} if either $f$ is identically zero or, for any point $x \in M$, there is a local complex chart $(U,z)$ centered at $x$ such that 	$f(z)= (z\bar{z})^p f_1(z)$, where $p\geq 0$ is an integer  
	 and $f_1$ is a nonvanishing smooth function on $U$. In \cite{JiaoXu2018}, the authors admit $p$ to be any positive or negative integer. The following generalization will play an important role in our study of primitive immersions of constant curvature into flag manifolds.

	 \begin{definition}
	 	A  continuous function $f:M\to [0,\infty]$ on a Riemann surface $M$ is said to be of \emph{generalized absolute value type function} if either $f$ is identically zero or,  for any point $x \in M$, there is a local complex chart $(U,z)$ centered at $x$ (i.e. $z(x)=0$), such that 
$f(z)= (z\bar{z})^p f_1(z)$,
	 	for some $ p\in \mathbb{R}$ and positive smooth function $f_1:U\to (0,\infty)$.  If $p>0$, we say that $x$ is a \emph{zero of order} $p$; if $p<0$, we say that $x$ is a \emph{pole of order} $-p$. We denote $p=\mathrm{order}_x(f)$. 
	 \end{definition}
		From the definition, it follows that the zeros and poles of a generalized absolute value type function $f$ are isolated. Moreover, if $M$ is compact, the set	$S$ of zeros and poles is finite; in this case, we set 
	$\mathrm{order}(f)=\sum_{x\in S} \mathrm{order}_x(f).$

 The following lemmas  extend \cite[Lemma 4.1]{Eschenburg},  \cite[Lemma 2.3]{Li99} and  \cite[Lemma 2.8]{JiaoXu2018} to generalized absolute value type functions. We will omit the proofs, since these are straightforward adaptations of  the ones on absolute value type functions. 
	 \begin{lemma}
	 	Let $f$ be a generalized absolute value type function on a compact Riemann surface $M$, equipped with a conformal Riemannian metric. Then  \begin{equation*}\label{integralLaplacian}
	 		\int_{M} \Delta_M \log f\,dA=-4\pi \, \mathrm{order}(f).
	 	\end{equation*}
	 \end{lemma}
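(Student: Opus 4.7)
The plan is to imitate the standard Green's theorem argument used for absolute value type functions in \cite{Eschenburg,Li99,JiaoXu2018}: since the local model $(z\bar z)^p$ treats $p$ purely multiplicatively, nothing in that proof is sensitive to whether $p$ is an integer or a real number. Let $S=\{x_1,\ldots,x_N\}$ be the finite set of zeros and poles of $f$, with orders $p_i=\mathrm{order}_{x_i}(f)$. On $M\setminus S$ the function $\log f$ is smooth; moreover, near $x_i$, writing $f=(z\bar z)^{p_i}f_1$ with $f_1>0$ smooth, the identity $\partial_z\partial_{\bar z}\log(z\bar z)=0$ off the origin gives $\Delta_M\log f=\Delta_M\log f_1$ in a punctured neighborhood. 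Hence $\Delta_M\log f$ extends smoothly across each $x_i$, and in particular the integral $\int_M\Delta_M\log f\,dA$ converges absolutely and equals the limit, as $\epsilon\to 0$, of the integral over $M_\epsilon:=M\setminus\bigcup_i D_\epsilon(x_i)$.

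On $M_\epsilon$, I would apply Green's formula: the identity $\Delta_M u\,dA=d(*du)$ for the Laplacian of a conformal metric, together with $\partial M_\epsilon=-\sum_i\partial D_\epsilon(x_i)$ (circles oriented counterclockwise), yields
\begin{equation*}
\int_{M_\epsilon}\Delta_M\log f\,dA=-\sum_{i=1}^N\int_{\partial D_\epsilon(x_i)}*d(\log f).
\end{equation*}
The Hodge star on a Riemann surface with a conformal metric satisfies $*dz=-i\,dz$ and $*d\bar z=i\,d\bar z$, so
\begin{equation*}
*d(\log f)=-i\,\partial_z(\log f)\,dz+i\,\partial_{\bar z}(\log f)\,d\bar z,
\end{equation*}
which reduces the evaluation to a purely local computation in the coordinate $z$ centered at $x_i$.

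The core step is the $\epsilon\to 0$ limit of each boundary integral. Substituting $\log f=p_i\log(z\bar z)+\log f_1$, the smooth summand $\log f_1$ contributes $O(\epsilon)$ and disappears in the limit, while parametrizing $z=\epsilon e^{i\theta}$ gives
\begin{equation*}
\bigl(-i\,\tfrac{p_i}{z}\,dz\bigr)+\bigl(i\,\tfrac{p_i}{\bar z}\,d\bar z\bigr)=p_i\,d\theta+p_i\,d\theta=2p_i\,d\theta
\end{equation*}
on $\partial D_\epsilon(x_i)$, so that $\int_{\partial D_\epsilon(x_i)}*d(\log f)\to 4\pi p_i$. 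Summing over $i$ and using $\mathrm{order}(f)=\sum_i p_i$ produces the claimed equality. There is no serious obstacle here: verifying $\partial_z\partial_{\bar z}\log(z\bar z)=0$ away from the origin, and the residue-like computation on a small circle, are both indifferent to whether $p_i$ is integer or real, which is precisely why the integer-order proof adapts verbatim.
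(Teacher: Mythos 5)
Your argument is correct and is exactly the standard Green's-formula/residue computation that the paper alludes to when it omits the proof as a "straightforward adaptation" of the absolute value type case; your key observation that nothing in that computation uses integrality of $p_i$ is precisely the point. The conventions ($*dz=-i\,dz$, counterclockwise boundary circles) are internally consistent and yield the correct sign $-4\pi\,\mathrm{order}(f)$.
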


	 \begin{lemma}\label{Absolute function lemma2}
	 	Consider on $S^2$ a conformal Riemannian metric of contant curvature $K$. Let $f$ be a generalized absolute value type function on $S^2$. Assume that
	 	there exists a constant $c$ such that
	 		$\frac{1}{4}\Delta_{S^2}\log f=c$,
		off the set of zeros and poles. Then the following holds: (1)
 the constant $c$ is given by
	 	$c=-\frac{K}{4} \mathrm{order}(f);$ (2)
	 	 in terms of the canonical complex coordinate $z$ on $\C=S^2\setminus\{\infty\}$, we have $$f(z)= a\frac{\left|z-z_0\right|^{2p_0}\ldots \left|z-z_m\right|^{2p_{m}}}{\left|z-w_0\right|^{2q_0}\ldots \left|z-w_l\right|^{2q_{l}}}    \left(1+z\bar{z}\right)^{-\mathrm{order}(f)},$$  
	 		where:  $z_0,\ldots, z_m$ are the zeros of $f$ contained in  $\mathbb{C}$, and  $p_0,\ldots , p_m$ are their real orders; $w_0,\ldots, w_m$ are the poles of $f$ contained in  $\mathbb{C}$, and  $q_0,\ldots , q_l$ are  their real orders; $a$ is a positive real constant.
	 \end{lemma}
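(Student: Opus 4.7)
For part (1), I would integrate the hypothesis $\frac{1}{4}\Delta_{S^2}\log f=c$ over $S^2$. The preceding lemma gives $\int_{S^2}\Delta_{S^2}\log f\,dA=-4\pi\,\mathrm{order}(f)$, while Gauss--Bonnet together with constant curvature $K$ yields $\int_{S^2}dA=4\pi/K$. Combining these gives $4\pi c/K=-\pi\,\mathrm{order}(f)$, that is, $c=-\frac{K}{4}\mathrm{order}(f)$, proving (1).

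For part (2), I would write the constant curvature metric in the canonical coordinate $z$ on $\mathbb{C}=S^2\setminus\{\infty\}$ as $\rho^2\,dz\,d\bar z$ with $\rho^2=(4/K)(1+z\bar z)^{-2}$, so that $\Delta_{S^2}=(4/\rho^2)\partial_z\partial_{\bar z}$. The equation $\frac{1}{4}\Delta_{S^2}\log f=c$ combined with $c=-\frac{K}{4}\mathrm{order}(f)$ then reduces, off the zeros and poles, to
\[
\partial_z\partial_{\bar z}\log f=-\frac{\mathrm{order}(f)}{(1+z\bar z)^2}=-\mathrm{order}(f)\,\partial_z\partial_{\bar z}\log(1+z\bar z),
\]
so that $F(z):=\log f(z)+\mathrm{order}(f)\log(1+z\bar z)$ is harmonic on $\mathbb{C}$ away from the zeros and poles of $f$. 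By the very definition of generalized absolute value type function, near each zero $z_i$ of order $p_i$ one has $F(z)=2p_i\log|z-z_i|+\text{(smooth)}$, and similarly $F(z)=-2q_j\log|z-w_j|+\text{(smooth)}$ near each pole $w_j$. Consequently
\[
h(z):=F(z)-\sum_i 2p_i\log|z-z_i|+\sum_j 2q_j\log|z-w_j|
\]
extends to a smooth harmonic function on all of $\mathbb{C}$.

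To close the argument I need $h$ to be bounded at $\infty$, so that Liouville forces $h\equiv\log a$ for some $a>0$ and the stated formula for $f$ drops out. In the coordinate $w=1/z$ near $\infty$ one has $1+z\bar z=(1+|w|^2)/|w|^2$, $|z-z_i|=|1-z_i w|/|w|$, $|z-w_j|=|1-w_j w|/|w|$, and, by the hypothesis that $f$ is of generalized absolute value type at $\infty$, $f=|w|^{2p_\infty}f_1(w)$ for some smooth positive $f_1$ near $w=0$. Collecting the coefficients of $\log|w|$ in $h$ gives $2p_\infty-2\,\mathrm{order}(f)+2\sum_i p_i-2\sum_j q_j$, which vanishes precisely because $\mathrm{order}(f)=p_\infty+\sum_i p_i-\sum_j q_j$. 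Hence $h$ is bounded near $\infty$, and Liouville applies. The main obstacle is this bookkeeping at $\infty$: it is essential that $f$ be a genuine generalized absolute value type function \emph{globally} on $S^2$, so that the real order at $\infty$ is well defined and contributes to $\mathrm{order}(f)$ in exactly the way required to cancel the spurious $\log|w|$ growth of $h$.
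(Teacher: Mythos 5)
Your proof is correct and is exactly the "straightforward adaptation" of the classical absolute-value-type argument (as in Eschenburg et al.\ and Li--Yu) that the paper invokes while omitting the proof: integrate against the preceding lemma and Gauss--Bonnet for (1), then subtract the logarithmic singularities from $\log f+\mathrm{order}(f)\log(1+z\bar z)$ and apply Liouville for (2). The bookkeeping at $\infty$, where the real order $p_\infty$ cancels the residual $\log|w|$ growth precisely because it enters $\mathrm{order}(f)$, is the right and only delicate point, and you have handled it correctly.
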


	 \section{Primitive immersions into $F_{2,1,1}$}\label{F211}
	 
	 \begin{theorem}\label{mainresult1}
	 	Let $\psi_0:S^2 \to G_{2}(\C^4)$ be a  linearly full harmonic map of isotropy order $r\geq 2$, with 
	 	first and second $\partial '$-Gauss bundles $\psi_1$ and $\psi_2$, respectively.
	 	Assume that  the primitive map $\Psi= \left(\psi_0,\psi_1, \psi_2\right):S^2 \to F_{2,1,1}$ is an immersion. If there exists at least one $\U(4)$-invariant metric on $F_{2,1,1}$ with respect to which $\Psi$ has constant curvature, then $\Psi$ is unitarily congruent to one of the following curves:
	 	\begin{enumerate}
	 		\item  $\Psi_1=  ( V_{0}^{3} \oplus V_{1}^{3},   V_{2}^{3},  V_{3}^{3})$;
	 		\item $\Psi_2= ( V_{0}^{2} \oplus \C, V_{1}^{2}, V_{2}^{2})$ (taking  $\C^4=\C^3\oplus \C$);
	 		\item $\Psi_2=  ( V_{0}^{3}\oplus V_{3}^{3}, V_{1}^{3}, V_{2}^{3})$.

	 	\end{enumerate}
	 	For a $\U(4)$-invariant metric on $F_{2,1,1}$ with parameters $\lambda_0,\lambda_1,\lambda_2$, using the notations of \eqref{eq:metric1}, the  metrics  induced by $\Psi_1$, $\Psi_2$ and $\Psi_3$, respectively, have curvatures
	 	$K(\Psi_1)=\frac{4}{4 \lambda_{0}+ 3 \lambda_{1}}$, $	K(\Psi_2) = \frac{2}{\lambda_{0}+ \lambda_1}$, and $  K(\Psi_3) =\frac{4}{3 \lambda_{0}+ 4 \lambda_{1}+ 3\lambda_{2}}$.  
	 \end{theorem}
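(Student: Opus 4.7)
The plan is to combine three ingredients: a Liouville-type PDE from the constant-curvature hypothesis, a rank/isotropy bookkeeping that gives a clean dichotomy on the structure of the harmonic sequence of $\psi_0$, and the generalized absolute value type machinery of Section 2 (notably Lemma \ref{Absolute function lemma2}) to force each $\gamma_j$ into the Veronese shape $\alpha_j/(1+z\bar z)^2$. Concretely, setting $F := \lambda_0 \gamma_0 + \lambda_1 \gamma_1 + \lambda_2 \gamma_2$, the hypothesis combined with \eqref{kpsi} gives
\[
\partial_z \partial_{\bar z} \log F = -\tfrac{K}{2}\, F,
\]
and $F$, being a positive combination of generalized absolute value type functions, is itself such a function; Lemma \ref{Absolute function lemma2} then pins down $F = c/(1+z\bar z)^2$ in the canonical chart of $\C = S^2 \setminus \{\infty\}$. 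The rank bookkeeping $2+1+1 = 4 = \dim\C^4$ forces $\psi_0 \oplus \psi_1 \oplus \psi_2 = S^2 \times \C^4$, and the isotropy hypothesis $r \geq 2$ gives $\psi_{-1} \perp \psi_0 \oplus \psi_1$, hence $\psi_{-1} \subseteq \psi_2$; since $\psi_2$ is a line bundle, this yields the clean dichotomy $\psi_{-1}=0$ (so $\psi_0$ is holomorphic) or $\psi_{-1} = \psi_2$ (so the $\partial'/\partial''$-sequence wraps around and $\psi_0$ decomposes as a holomorphic plus an antiholomorphic line summand).

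In the holomorphic branch, the previous theorem (applied with $\Pi = \mathrm{id}$) upgrades ``CC for one metric'' to ``CC for all invariant metrics'' on $(\psi_0,\psi_1,\psi_2)$; varying $(\lambda_0,\lambda_1,\lambda_2)$ in Liouville's equation then forces each $\gamma_j$ individually to be $\alpha_j/(1+z\bar z)^2$ for nonnegative constants $\alpha_j$. The harmonic-diagram method of \cite{burstall-wood,PW1} splits $\psi_0$ as a direct sum of at most two holomorphic line subbundles, and Calabi rigidity combined with the shape constraint $\gamma_j = \alpha_j/(1+z\bar z)^2$ identifies each summand with a Veronese line; the rank/dimension budget $\dim\C^4 = 4$ together with primitivity of $\Psi$ leaves only $\psi_0 = V_0^3 \oplus V_1^3$ (case 1) or $\psi_0 = V_0^2 \oplus \C$ (case 2). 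In the wrap-around branch, the analogous analysis applied separately to the holomorphic and antiholomorphic summands of $\psi_0$ yields the single possibility $\psi_0 = V_0^3 \oplus V_3^3$ (case 3): indeed one checks directly that $A''_{V_3^3}$ has image $V_2^3 = \psi_2$, matching the dichotomy. Finally, the three curvature values are direct computations: inserting \eqref{metric for veronese} into \eqref{gammaj} reduces each $\gamma_j$ to an explicit rational with denominator $(1+z\bar z)^2$ and integer numerator, after which \eqref{kpsi} collapses to $K(\Psi_i) = 4/(\sum_j \lambda_j \alpha_j)$ and gives the stated values.

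The main obstacle I anticipate is the classification step: ruling out asymmetric Veronese sums $V_i^m \oplus V_j^n$ with $m\neq n$, or with index pairs $(i,j)$ outside those listed, that might in principle produce a constant-curvature primitive lift. Ruling these out requires both the integer constraints on the $\alpha_j$ inherited from \eqref{metric for veronese} and a careful check that any alternative candidate violates either the primitivity of $(\psi_0,\psi_1,\psi_2)$ or the ambient rank budget $\dim\C^4 = 4$; the latter step is where Lemma \ref{Absolute function lemma2} is doing the essential work, since it forces not only the denominator $(1+z\bar z)^{2}$ but also the integer numerators via the order count.
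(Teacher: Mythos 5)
Your proposal does not follow the paper's route (the paper never invokes Lemma \ref{Absolute function lemma2} in this proof; it works entirely with harmonic diagrams, the theorem of Subsection \ref{homadd} / \cite[Theorem 2.1]{PM}, and \cite[Theorem 5.4]{Bolton}), and as written it has genuine gaps. The most serious one is in the wrap-around branch ($r=2$). Your mechanism for forcing each $\gamma_j$ into the shape $\alpha_j/(1+z\bar z)^2$ rests on first upgrading ``constant curvature for one invariant metric'' to ``for all invariant metrics'', but that upgrade (\cite[Theorem 2.1]{PM} and its generalization in Subsection \ref{homadd}) requires $\psi_0$ to be \emph{holomorphic}, which it is not in this branch. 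The paper gets around this with two steps you omit entirely: (i) in the refined diagram one must show the arrow $f_0\to f_1$ vanishes, which uses nilpotency of the cycle on $S^2$ together with the fact that a full harmonic map $S^2\to\CP^3$ cannot have finite isotropy order (\cite[Proposition 1.8]{burstall-wood}); and (ii) one then re-reads $\Psi=(f_0\oplus f_1,f_2,f_3)$ as the primitive map $\tilde\Psi=(f_1,f_2,f_3,f_0)$ into $F_{1,1,1,1}$, which induces the same metric on $S^2$, and only \emph{then} applies \cite[Theorem 2.1]{PM}. Without these, case (3) is not established. Your remark that ``one checks directly that $A''_{V_3^3}$ has image $V_2^3$'' only verifies that $\Psi_3$ satisfies the hypotheses; it does not prove uniqueness.

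The holomorphic branch also has a soft spot that you yourself flag as the ``main obstacle'' but do not resolve. Knowing $F=c/(1+z\bar z)^2$, or even each $\gamma_j=\alpha_j/(1+z\bar z)^2$, does not by itself identify the curve: the rigidity input needed is \cite[Theorem 5.4]{Bolton}, applied to the holomorphic line bundle $f_0=\underline{\ker}A'_{\psi_0}$ (or to $f_1$ when $f_0$ is constant), using the fact that the constant-curvature member $\psi_1=f_2$ sits in the harmonic sequence of that holomorphic line. An appeal to ``Calabi rigidity'' is not enough, since Calabi's theorem concerns holomorphic curves and the member you control ($\psi_1$) is not holomorphic. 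Also, $\psi_0$ does not split as a sum of two holomorphic line subbundles: only $f_0$ is holomorphic, and $f_1=f_0^\perp\cap\psi_0$ is not (indeed $V_1^3$ is not a holomorphic subbundle of $\underline{\C}^4$); the dichotomy the paper actually uses is whether the second fundamental form $f_0\to f_1$ vanishes, which cleanly separates cases (1) and (2). Finally, Lemma \ref{Absolute function lemma2} does not supply ``integer numerators'': the orders in the generalized setting are real, so the integrality you hope to extract from it is not there.
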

	 \begin{proof}
 By dimension constraints, either  $r=\infty$  or $2$.	 	
	 	
	 	Suppose that $\psi_0$ is holomorphic, i.e., $r=\infty$.  Since $\Psi$ has constant curvature with respect to at least one $\U(4)$-invariant metric on $F_{2,1,1}$, then  $\Psi$ has constant curvature  with respect to all   $\U(4)$-invariant metrics on $F_{2,1,1}$ and each $\psi_{i}$ is an immersion of constant curvature (see \cite[Theorem 2.1]{PM}). The  fundamental harmonic diagram for this case is the following:
	 	\begin{equation*}\label{second harmonic sequence}
	 		\begin{tikzpicture}
	 		\node (beta0) at (0, 0) {$\psi_0$};
	 		\node (beta1) at (1.1, 0) {$\psi_1$};
	 		\node (beta2) at (2.2, 0) {$\psi_2$};
	 		\draw[->] (beta0) -- (beta1);
	 		\draw[->] (beta1) -- (beta2);
	 	\end{tikzpicture}.
	 	\end{equation*}
	 	with $\rank \psi_0=2$ and $\rank \psi_1=\rank\psi_2=1$. For definitions and properties of harmonic diagrams, see \cite{burstall-wood,PW1}. 
	 	Set $f_0= \underline{\ker} A'_{\psi_0}$ and $f_{1}= f_0^\perp \cap \psi_{0}$. Then $f_0= \underline{\ker} A'_{\psi_0}$ is a holomorphic subbundle of $\psi_{0}$ \cite[Proposition 2.2]{burstall-wood}. Thus,  the fundamental diagram admits the following refinement: 
	 	\begin{equation}\label{diag:G10}
	 		\begin{tikzpicture}
	 			\node (beta0) at (0, 1.1) {$f_1$};
	 			\node (beta1) at (1.1, 1.1) {$f_2$};
	 			\node (beta2) at (2.2, 1.1) {$f_3$};
	 			\node (alpha0) at (0, 0) {$f_0$};
	 			\draw[->] (beta0) -- (beta1);
	 			\draw[->] (beta1) -- (beta2);
	 			\draw[->] (alpha0) -- (beta0);
	 		\end{tikzpicture}.
	 			 	\end{equation}
	 	Here we are denoting $f_{2}=\psi_1$ and  $f_3=\psi_2$. 
	 	In this diagram, the horizontal arrows are nonzero, otherwise $\psi_0$ would not be linearly full. Now, if the vertical arrow is nonzero, then $f_0:S^2\to\CP^3$ is a linearly full holomorphic map.  Since $f_2=\psi_1$ has constant curvature, then, by \cite[Theorem 5.4]{Bolton},  $f_0$ is unitarily congruent to the Veronese map $V^3_0$, and, consequently, $\Psi$ is unitarily congruent to $\Psi_1=  \left( V_{0}^{3} \oplus V_{1}^{3},   V_{2}^{3},  V_{3}^{3}\right)$. Since, by Theorem \ref{veronesebolton}, 
	 	$\gamma^{3}_{1}= \frac{4}{(1+z\bar{z})^2}$ and $\gamma^{3}_{2}= \frac{3}{(1+z\bar{z})^2}$,
	 	the metric induced on $S^2$ by $\Psi_1$ is 
	 		 $\frac{4 \lambda_{0} +3 \lambda_{1}}{(1+z\bar{z})^2}dzd\bar z$, 
	 	and, in view of \eqref{kpsi}, the corresponding curvature is given by 
	 		$K(\Psi_1)= \frac{4}{4 \lambda_{0}+ 3 \lambda_{1}}.$
	 	
	 	Now, if the vertical arrow vanishes, then $f_0$ is a constant one-dimensional complex subspace, which we denote by $\C$, and  the diagram \eqref{diag:G10} becomes
	 	\begin{equation*}
	 	\label{diag:G12}
	 			\begin{tikzpicture}
	 			\node (beta0) at (0, 1.1) {$f_1$};
	 			\node (beta1) at (1.1, 1.1) {$f_2$};
	 			\node (beta2) at (2.2, 1.1) {$f_3$};
	 			\node (alpha0) at (0, 0) {$\C$};
	 			\draw[->] (beta0) -- (beta1);
	 			\draw[->] (beta1) -- (beta2);
	 		\end{tikzpicture}.
	 	\end{equation*}
	 	In this case, $f_1$ is holomorphic. Again, since $f_2=\psi_1$ has constant curvature, then, by \cite[Theorem 5.4]{Bolton},  $f_1$ is unitarily congruent to the Veronese map $V^2_0$, and, consequently, $\Psi$ is unitarily congruent  to $\Psi_2=  \left( V_{0}^{2} \oplus \C, V_{1}^{2}, V_{2}^{2} \right)$.
	 	Since
	 		$\gamma_0^{2}=\gamma_1^{2} =\frac{2}{(1+z\bar{z})^2}$,
	 	the induced metric is given by
	 $\frac{2(\lambda_{0}+\lambda_{1})}{(1+z\bar{z})^2}dzd\bar z$;
	 	and the corresponding curvature 
	 		$K(\Psi_2) = \frac{2}{\lambda_{0}+ \lambda_1}.$

	 	Suppose now that $\psi_0$ has isotropy order $r=2$. We have the  fundamental harmonic diagram
	 	\begin{equation}\label{diagramisotropy2}
	 		\begin{tikzpicture}
	 				\node (beta0) at (0, 0) {$\psi_0$};
	 			\node (beta1) at (1.1, 0) {$\psi_1$};
	 			\node (beta2) at (2.2, 0) {$\psi_2$};
	 			\draw[->] (beta0) -- (beta1);
	 			\draw[->] (beta1) -- (beta2);
	 			\draw[->, bend right=30] (beta2) to node[swap] {} (beta0);
	 		\end{tikzpicture}
	 	\end{equation}
	 	with $\rank \psi_0=2$ and $\rank \psi_1=\rank\psi_2=1$. Observe that here we cannot apply  \cite[Theorem 2.1]{PM}  to conclude that each $\psi_i$ has constant curvature, because $\psi_0$ is not holomorphic.

	 	In \eqref{diagramisotropy2}, the backward arrow is nonzero, because $\psi_0$ has isotropy order $r=2$. 
	 	Set $f_0= \underline{\ker} A'_{\psi_0}$ and $f_{1}= f_0^\perp \cap \psi_{0}$, so that $\psi_0=f_0\oplus f_1$. By \cite[Proposition 2.2]{burstall-wood}, $f_0= \underline{\ker} A'_{\psi_0}$ is a holomorphic subbundle of $\psi_{0}$. 
	 Moreover, due to the topology of $S^2$, we know that the cycle \eqref{diagramisotropy2} is nilpotent \cite{PW1}, i.e., $c=A'_{\psi_2}\circ A'_{\psi_1}\circ A'_{\psi_0}$ is nilpotent.	Hence, we have the following refinement of \eqref{diagramisotropy2}:
	 	\begin{equation}\label{diagramisotropy2ref1}
	 		\begin{tikzpicture}
	 			\node (beta0) at (0, 1.1) {$f_1$};
	 			\node (beta1) at (1.1, 1.1) {$f_2$};
	 			\node (beta2) at (2.2, 1.1) {$f_3$};
	 			\node (alpha0) at (0, 0) {$f_0$};
	 			\draw[->] (beta0) -- (beta1);
	 			\draw[->] (beta1) -- (beta2);
	 \draw[->] (alpha0) -- (beta0);
	 			\draw[->] (beta2) to[out=250, in=-15, looseness=1.25] (alpha0);
	 		\end{tikzpicture}.
	 	\end{equation}By  \cite[Proposition 1.6]{burstall-wood}\footnote{The correction of  \cite[Proposition 1.6]{burstall-wood} can be consulted in  https://people.bath.ac.uk/feb/papers/bw-corr/paper.pdf}, the smooth map $f_1:S^2\to \CP^{3}$ is harmonic.
	 	If all the arrows in this diagram are nonzero, then $f_1:S^2\to \CP^{3}$ would be a harmonic map of finite isotropy order, which we know to be impossible due to the topology of $S^2$ (see \cite[Proposition 1.8]{burstall-wood}). Hence, in \eqref{diagramisotropy2ref1}, the vertical arrow  from $f_0$ to $f_1$ must be zero, and we get 
	 	\begin{equation*}\label{refinement of the diagram}
	 		\begin{tikzpicture}
	 			\node (beta0) at (0, 1.1) {$f_1$};
	 			\node (beta1) at (1.1, 1.1) {$f_2$};
	 			\node (beta2) at (2.2, 1.1) {$f_3$};
	 			\node (alpha0) at (0, 0) {$f_0$};
	 			\draw[->] (beta0) -- (beta1);
	 			\draw[->] (beta1) -- (beta2);
	 			\draw[->] (beta2) to[out=250, in=-15, looseness=1.25] (alpha0);
	 		\end{tikzpicture}.
	 	\end{equation*}
	 	
	 	In view of \eqref{eq:metric1}, the metric induced on $S^2$ by $\Psi=(f_0\oplus f_1,f_2,f_3)$ from the invariant metric on $F_{2,1,1}$ with parameters $\lambda_0,\lambda_1,\lambda_2$ is equal to the metric  induced on $S^2$ by $\tilde\Psi=( f_1,f_2,f_3,f_0)$ from the metric  on $F_{1,1,1,1}$ with parameters $\lambda_0,\lambda_1,\lambda_2,\lambda_3$, regardeless of the value of $\lambda_3>0$. Hence $\tilde\Psi$ is an immersion of constant curvature. By   \cite[Theorem 2.1]{PM}, we conclude that 
	 	$f_2=\psi_1:S^2\to\CP^3$ has constant curvature, then, by \cite[Theorem 5.4]{Bolton},  $f_1$ is unitarily equivalent to the Veronese map $V_0^3$, hence $f_0$ is unitarily congruent to $V_3^3$,
	 	and the result follows.
	 \end{proof}	
	
		\begin{remark}
		A careful inspection of the proof of Theorem \ref{mainresult1} shows that any other primitive immersion $\Psi:S^2\to F_{2,1,1}$ of constant curvature with respect to at least one $\U(4)$-invariant metric on $F_{2,1,1}$ can be obtained from Veronese primitive immersions by operating with homogeneous projections,  adding constants, adding primitive maps,  and shifts.  For example, in diagram \eqref{diagramisotropy2ref1}, the case where the arrows $f_0\to f_1$ and $f_2\to f_3$ vanish leads to the primitive immersion into $F_{2,1,1}$ of constant curvature given by $\Psi_1\oplus \Psi_2$, where 
		$\Psi_1=(V_0^1,V_1^1,0)$ and 	$\Psi_2=\tau^2\Psi_1=(V_1^1,0,V_0^1)$.
	\end{remark}
	
	 \begin{remark}\label{liftinF_2,1,1,1}
	 		More generally, suppose that $\Psi=\left(\psi_{0},\psi_{1},\ldots,\psi_{n-1}\right):S^2 \to F_{2,1,\ldots,1}$ is  a primitive immersion of constant curvature with respect to at least one $\U(n)$-invariant metric on $F_{2,1,\ldots,1}$, where $\psi_{0}:S^2 \to G_{2}(\C^n)$ is a linearly full harmonic map of isotropy order $r\geq n-1$, and each $\psi_j$ is the $j$-th $\partial'$-Gauss bundle of $\psi_0$. By straightforward extension of the previous proof,  $\Psi$ is unitarily congruent to one of the following curves:
	 		\begin{enumerate}
	 			\item  $\Psi_1=  \left( V_{0}^{n-1} \oplus V_{1}^{n-1},   V_{2}^{n-1},\ldots ,  V_{n-1}^{n-1}\right)$;
	 			\item $\Psi_2=  \left( V_{0}^{n-2} \oplus \C, V_{1}^{n-2},\ldots, V_{n-2}^{n-2} \right)$;
	 			\item  $\Psi_3=  \left( V_{0}^{n-1} \oplus V_{n-1}^{n-1},   V_{2}^{n-1},\ldots   V_{n-2}^{n-1}\right)$.
	 	\end{enumerate}
	 \end{remark}
	 
	 In the following remarks we compare our result with  \cite{Li99}.
	 
	 \begin{remark}\label{Lihol}
	 		In \cite[Theorem A]{Li99}, the authors provided a complete classification of holomorphic immersions of constant curvature $K$ from $S^2$ into $G_2(\C^4)$. 
	 		Any such immersion $\psi_0:S^2\to G_2(\C^4)$ is unitarily congruent to one of the following minimal immersions (locally spanned by $f_0$ and $g_0$, in terms of the canonical complex coordinate $z$ on $\C=S^2\setminus\{0\}$):
	   
	 (1)
	 			$f_0= (1,0,z,0)$ and $g_0=(0,1,0,0)$, with $K=4$.
	 			This holomorphic immersion is not linearly full. 
	 		
	 		(2) $f_0=(1,0,z^2 \cos 2t ,\sqrt 2 z \sin t )$ and $g_0=(0,1,\sqrt 2 z\cos t,0 )$
	 			for some $t\in [0,\pi)$, with $K=2$. Clerarly, $\psi_0$ is not lineally full if $t=0$, so we take  $t\in (0,\pi)$.
	 		A	straightforward computation shows that
	 			$\|f_0\wedge g_0\wedge \partial_z f_0\wedge \partial_z g_0\|^2=2\sin^2 t\cos^2 t.$
	 			Hence the first $\partial'$-Gauss bundle $\psi_1$ of $\psi_0$ has rank 1 if $t=\frac{\pi}{2}$ and it has rank 2 otherwise. For $t=\frac{\pi}{2}$, we have $f_0=V_0^2$ and $g_0$ is constant, hence  $\psi_0$ (with  $t=\frac{\pi}{2}$) generates the primitive lift $\Psi_2$ of Theorem \ref{mainresult1}.  
	 			
	 			(3) 
	 			$f_0= (1,0,\sqrt{3}\,z^2,0)$ and  $g_0=(0,1,\sqrt{8/3}\,z,\sqrt{1/3}\,z),$ with $K=\frac43$.
	 			A straightforward computation shows that
	 			$\|f_0\wedge g_0\wedge \partial_zf_0\wedge \partial_zg_0\|^2=4|z|^2.$
	 			Hence the first Gauss bundle of $\psi$ has rank $2$.
	 			
	 			(4)
	 			$f_0= (1,0,2z^3,\sqrt{3}z^2)$ and $g_0=(0,1,\sqrt{3}z^2,2z)$, with $K=1$.
	 			Consider the Plücker embedding $\iota: G_2(\C^4)\hookrightarrow  \CP^5$. A straightforward computation shows that, up to unitary congruence, 
	 			$\iota\circ \psi_0=\iota\circ (V_0^3\oplus V_1^3).$ 
	 			Hence $\psi_0$ generates the primitive immersion $\Psi_1$ in Theorem \ref{mainresult1}.
	 
	 \end{remark}

	 \begin{remark}\label{Linon}
	 	In \cite[Theorem B]{Li99}, the authors provided a complete classification for constant curved minimal immersions  $\psi_0:S^2\to G_2(\C^4)$ which are neither holomorphic nor antiholomorphic. Any such immersion is unitarily congruent to one of the following minimal immersions (locally spanned by $f_0$ and $g_0$, in terms of the canonical complex coordinate $z$ on $\C=S^2\setminus\{0\}$):
	 		
	 		(1)  $f_0(z)=(1,z,0,0)$ and $g_0(z)=(0,0,1,\bar z)$, with $K=2$. This minimal immersion has second  $\partial'$-Gauss bundle $\psi_2=\{0\}$.
	 		
	 		(2)  $f_0(z)=(1,\sqrt 2 \,z,z^2,0)$ and $g_0(z)=(\bar z^2,-\sqrt 2 \bar z,1,0)$, with $K=1$. This minimal immersion is not full. 
	 			
	 			(3) $f_0(z)=(1,\sqrt 3 \,z,\sqrt 3\,z^2,z^3)$ and $g_0(z)=(\bar z^3,-\sqrt 3 \bar z ^2,\sqrt 3\bar z,-1)$, with $K=\frac23$. This is precisely $\psi_0=V_0^3\oplus V_3^3$, and its harmonic sequence yields $\Psi_3$ in Theorem \ref{mainresult1}.
	 	
	 	(4)  $f_0(z)=(1,\sqrt 3 \,z,\sqrt 3\,z^2,z^3)$ and $g_0(z)=(\sqrt 3 \bar z^2,\bar z(z\bar z-2),1-2z\bar z,\sqrt 3 z)$, with $K=\frac25$. We have
	 			$\|f_0\wedge g_0\wedge \partial_zf_0\wedge \partial_zg_0\|^2=9(1+z\bar z)^8.$
	 			Hence the first $\partial'$-Gauss bundle has rank $2$. 
	 		
	 \end{remark}
	 
	 From Remark \ref{Lihol} and Remark \ref{Linon} we conclude that our results are consistent  with the classification results in \cite{Li99}.
	 \section{Primitive immersions into $F_{2,2,1}$}\label{F221}
	 
	 \begin{theorem}\label{mainresult2}
	 	Let $\Psi= \left(\psi_0,\psi_1, \psi_2\right):S^2 \to F_{2,2,1}$ be a primitive immersion of constant curvature with respect to at least one $\U(5)$-invariant metric on $F_{2,2,1}.$ Here  $\psi_0:S^2 \to G_{2}(\C^5)$ is a  linearly full holomorphic map with 
	 	first and second $\partial '$-Gauss bundles $\psi_1$ and $\psi_2$. Then, $\Psi$ is unitarily congruent to the curve 
	 	\begin{equation}\label{primitivemainresult}
	 		\Psi= \left(V_{0}^{2} \oplus V_{0}^{1}, V_{1}^{2} \oplus V_{1}^{1}, V_{2}^{2}\right)\qquad \mbox{ (taking  $\C^5=\C^3\oplus \C^2$)}.
	 	\end{equation}
	 	For a $\U(5)$-invariant metric on $F_{2,2,1}$ with parameters $\lambda_0,\lambda_1,\lambda_2$, using the notations of \eqref{eq:metric1}, the  metric  induced by $\Psi$ has  curvature
	 	$K(\Psi)=\frac{4}{3 \lambda_{0}+2 \lambda_{1}}.$
	 \end{theorem}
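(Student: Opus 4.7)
I propose to follow the strategy from the proof of Theorem~\ref{mainresult1}, refining the fundamental harmonic diagram of $\psi_0$ until each piece can be identified with a Veronese via \cite[Theorem 5.4]{Bolton}. Since $\psi_0$ is holomorphic, \cite[Theorem 2.1]{PM} implies that $\Psi$ has constant curvature with respect to \emph{all} $\U(5)$-invariant metrics, and each $\psi_i$ is an immersion of constant curvature and constant K\"ahler angle. The fundamental diagram is $\psi_0 \to \psi_1 \to \psi_2$ of ranks $(2,2,1)$.

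First I would refine this diagram. Since $A'_{\psi_0}\colon \psi_0 \to \psi_1$ is a surjection between rank-$2$ bundles, it is an isomorphism (so $\underline{\ker}\, A'_{\psi_0} = 0$); and since $A'_{\psi_1}\colon \psi_1 \to \psi_2$ is surjective onto a rank-$1$ bundle, $g_1 := \underline{\ker}\, A'_{\psi_1}$ is a rank-$1$ subbundle which is holomorphic by \cite[Proposition 2.2]{burstall-wood}. Set $g_2 = g_1^\perp \cap \psi_1$, and define the rank-$1$ holomorphic subbundles of $\psi_0$
\begin{equation*}
h_0 := \underline{\ker}(A'_{\psi_1}\circ A'_{\psi_0}) \subset \psi_0, \qquad \tilde h_1 := \underline{\ker}(\pi_{g_1}\circ A'_{\psi_0}) \subset \psi_0,
\end{equation*}
which yield a (not necessarily orthogonal) direct sum decomposition $\psi_0 = h_0 \oplus \tilde h_1$. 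By construction, $A'_{\psi_0}(h_0) = g_1$ and $A'_{\psi_0}(\tilde h_1) = g_2$, so the refined diagram consists of two parallel line-bundle sub-sequences $h_0 \to g_1$ and $\tilde h_1 \to g_2 \to \psi_2$ without cross-arrows.

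Next, the crucial step: I would show that the total spans $W_1 = h_0 \oplus g_1$ and $W_2 = \tilde h_1 \oplus g_2 \oplus \psi_2$ are \emph{constant} orthogonal subspaces of $\C^5$, of dimensions $2$ and $3$ respectively. Forward closure of both sub-sequences is immediate by construction, while backward closure $A''_{\psi_1}(g_1) \subset h_0$ follows from the adjoint-type relationship between $A'_{\psi_0}$ and $A''_{\psi_1}$ combined with $h_0 = (A'_{\psi_0})^{-1}(g_1)$ and the fact that $\psi_0$ holomorphic gives $A''(\psi_0) = 0$. What remains is the vanishing of the ``internal'' second fundamental form within $\psi_0$ mixing $h_0$ and $\tilde h_1$---equivalently the parallelism of the decomposition in $\C^5$. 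I expect this to follow from the constant-curvature hypothesis via the generalized absolute value type function machinery of Section~2: each $\gamma_i$ has the rigid form $c_i/(1+z\bar z)^2$, and a componentwise analysis of $\gamma_0 = |a|^2 + |c|^2$, where $a,c$ are the diagonal entries of $A'_{\psi_0}$ in the bases arising from the decompositions $\psi_0 = h_0 \oplus \tilde h_1$ and $\psi_1 = g_1 \oplus g_2$, should rule out any mixing.

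Once $W_1$ and $W_2$ are constant, $h_0\colon S^2 \to \mathbb{P}(W_1) = \CP^1$ is a linearly full holomorphic immersion of constant curvature (apply \cite[Theorem 2.1]{PM} to the sub-sequence $(h_0, g_1)$), hence $h_0 \cong V_0^1$ by \cite[Theorem 5.4]{Bolton}; similarly $\tilde h_1 \cong V_0^2$ in $\mathbb{P}(W_2) = \CP^2$. Consequently, up to unitary congruence $\C^5 = \C^3 \oplus \C^2$ and $\Psi = (V_0^2 \oplus V_0^1, V_1^2 \oplus V_1^1, V_2^2)$. From Theorem~\ref{veronesebolton}, $\gamma_0 = \gamma_0^{V^2} + \gamma_0^{V^1} = 3/(1+z\bar z)^2$ and $\gamma_1 = \gamma_1^{V^2} = 2/(1+z\bar z)^2$, so $\Psi^*g = (3\lambda_0 + 2\lambda_1)/(1+z\bar z)^2\,dz\,d\bar z$, and \eqref{kpsi} gives $K(\Psi) = 4/(3\lambda_0 + 2\lambda_1)$. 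The main obstacle is verifying that $W_1$ (equivalently $W_2$) is a constant subspace, i.e.\ that the decomposition $\psi_0 = h_0 \oplus \tilde h_1$ is parallel in $\C^5$.
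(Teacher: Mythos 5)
There is a genuine gap, and it sits exactly at the point you flag as ``the main obstacle.'' Your entire argument funnels into the claim that $W_1=h_0\oplus g_1$ and $W_2=\tilde h_1\oplus g_2\oplus\psi_2$ are constant orthogonal subspaces of $\C^5$, but you offer only the expectation that this ``should'' follow from a componentwise analysis of $\gamma_0$. That claim is not a routine verification: it is equivalent to excluding the configurations in which $f_3\oplus f_4$ (in the notation of \eqref{diag:G26}) is linearly full in a $\CP^3$ or $\CP^4$, and in those configurations there is no splitting to be shown parallel --- they must be proved \emph{impossible}, which is a different kind of argument from ``rule out mixing.'' Relatedly, your assertion that the refinement consists of two parallel subsequences ``without cross-arrows'' rests on a non-orthogonal decomposition $\psi_0=h_0\oplus\tilde h_1$; harmonic diagrams and second fundamental forms are defined relative to orthogonal decompositions, and in the orthogonal refinement \eqref{diag:G26} all six arrows are a priori nonzero. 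Finally, even granting constancy of $W_1,W_2$, identifying $h_0$ with $V_0^1$ requires knowing that the \emph{individual} summand of $\gamma_0$ coming from $h_0$ is of the form $c/(1+z\bar z)^2$ (equivalently that $h_0$ has degree one), which again is the unproven componentwise statement; \cite[Theorem 2.1]{PM} applies to primitive lifts of holomorphic maps and does not by itself give curvature control on each summand of a direct sum.

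The paper's proof takes a different and more computational route precisely to close this gap. It first shows $f_3=V_{p-1}^p$, $f_4=V_p^p$ for some $p\in\{2,3,4\}$, then composes $\psi_0$ with the Pl\"ucker embedding $G_2(\C^5)\hookrightarrow\CP^9$: constant curvature of $\psi_0$ forces $\|\hat\sigma_0\|^2=(1+z\bar z)^k$, and $\hat\sigma_0$ is expanded as a combination of wedge products of Veronese sections with holomorphic polynomial coefficients $P_0,P_1,P_2$. Irreducibility of $1+z\bar z$ in $\C[z,\bar z]$ then forces successive coefficients to vanish, which rules out $p=4$ and $p=3$ outright and, in the case $p=2$, yields $P_1=0$, hence $f_1=V_0^2$ and $\|g_0\|^2=(1+z\bar z)^{k-2}/|P_0|^2$, so that $f_0$ has constant curvature and equals $V_0^1$. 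The generalized absolute value type function machinery you invoke is used in the paper only for the finite isotropy order case (Theorem \ref{mainresult3}), not here. To repair your proposal you would need to supply an actual argument excluding the linearly full cases; as written, the crux of the theorem is assumed rather than proved.
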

	 \begin{proof}
	 	Suppose that the immersion $\Psi=(\psi_0,\psi_1,\psi_2)$ has constant curvature with respect to at least one  $\U(5)$-invariant metric on $F_{2,2,1}$. Since $\psi_{0}:S^2 \to G_{2}(\C^5)$ is holomorphic, then by  \cite[Theorem 2.1]{PM},  $\Psi$ has constant curvature w.r.t all $\U(5)$-invariant metrics on $F_{2,2,1}$ and each $\psi_{i}$ is an immersion of constant curvature. We have the  following fundamental harmonic diagram
	 	\begin{equation*}\label{second harmonic sequence}
	 	\begin{tikzpicture}
	 		\node (beta0) at (0, 0) {$\psi_0$};
	 		\node (beta1) at (1.1, 0) {$\psi_1$};
	 		\node (beta2) at (2.2, 0) {$\psi_2$};
	 		\draw[->] (beta0) -- (beta1);
	 		\draw[->] (beta1) -- (beta2);
	 	\end{tikzpicture}.
	 \end{equation*}
	 	with $\rank \psi_0=\rank \psi_1=2$ and
	 	$\rank\psi_2=1$.
	 	Now, define the line subbundles
	 	$$f_2=\underline{\ker}A'_{\psi_1},\quad f_3=f_2^\perp\cap \psi_1,\quad f_1=\underline{\Img} A''_{\psi_1}{|_{f_3}},\quad f_0=  f_1^\perp\cap \psi_0,\quad f_4=\underline{\Img}A'_{\psi_1}.$$
	 	Then we have the following refinement of the fundamental diagram
	 	\begin{equation}\label{diag:G26}
	 			\begin{tikzpicture}
	 			\node (beta0) at (0, 1.1) {$f_1$};
	 			\node (beta1) at (1.1, 1.1) {$f_3$};
	 			\node (beta2) at (2.2, 1.1) {$f_4$};
	 			\node (alpha0) at (0, 0) {$f_0$};
	 			\node (alpha1) at (1.1, 0) {$f_2$};
	 			\draw[->] (beta0) -- (beta1);
	 			\draw[->] (beta1) -- (beta2);
	 			\draw[->] (beta0) -- (alpha1);
	 			\draw[->] (alpha0) -- (alpha1);
	 			\draw[->] (alpha0) -- (beta0);
	 			\draw[->] (alpha1) -- (beta1);
	 		\end{tikzpicture}
	 	\end{equation}
	 	where $\psi_0=f_0\oplus f_1$,  $\psi_1=f_2\oplus f_3$ and $\psi_2=f_4$. Due to our assumptions on the harmonic sequence $\psi_0,\psi_1,\psi_2$, all the horizontal arrows are nonzero. Observe that $f_3$ and $f_4$ are linearly full harmonic maps in $\CP^{p}$, for some $p\in\{2,3,4\}$. On the other hand, since $\psi_2=f_4$ has constant curvature, 
	 	$f_3,f_4:S^2\to \CP^{p}$ belong to a Veronese sequence: $f_3=V_{p-1}^{p}$ and $f_4=V_p^{p}$, up to unitary congruence. We analyze these three cases separately.

	 	\textbf{Case $p=4$.} We have 
	 	\begin{equation}\label{diag:G27}
	 			\begin{tikzpicture}
	 			\node (beta0) at (0, 1.1) {$f_1$};
	 			\node (beta1) at (1.1, 1.1) {$V_3^4$};
	 			\node (beta2) at (2.2, 1.1) {$V_4^4$};
	 			\node (alpha0) at (0, 0) {$f_0$};
	 			\node (alpha1) at (1.1, 0) {$f_2$};
	 			\draw[->] (beta0) -- (beta1);
	 			\draw[->] (beta1) -- (beta2);
	 			\draw[->] (beta0) -- (alpha1);
	 			\draw[->] (alpha0) -- (alpha1);
	 			\draw[->] (alpha0) -- (beta0);
	 			\draw[->] (alpha1) -- (beta1);
	 		\end{tikzpicture}.
	 	\end{equation}
	 	Now, take the composition of $\psi_0$ with the Pl\"{u}cker embedding  
	 	$G_{2}(\C^5)\hookrightarrow \CP^9$  to obtain  a holomorphic map 
	 	$\sigma_0:S^2\to\CP^{9}$. Since $\psi_0$ has constant curvature, then $\sigma_0$ also has constant curvature and it coincides, up to unitary congruence, with the Veronese map in $\CP^k$, for some $k\in\{1,\ldots, 9\}$.   Let $\hat\sigma_0:\C \to \C^{10}$ be a holomorphic local section of $\sigma_0$. Without loss of generality, we assume that   $\hat \sigma_0$ is nonvanishing and that its components are holomorphic polynomials.   
	 	Then 
	 	\begin{equation}\label{constant curvature}
	 		\|\hat\sigma_0\|^2=  \left(1+z\bar{z}\right)^k \quad \mbox{for some  $k\in\{1,\ldots, 9\}$}.
	 	\end{equation}
	 	
	 	Consider the local section $v_0=\left(1, 2z, \sqrt{6}z^2, 2z^3,z^4\right)$  of $V_0^{4}.$ In view of \eqref{diag:G27}, we have
	 	\begin{align*}
	 		f_0\oplus f_1\oplus f_2=V_0^{4}\oplus V_1^{4}\oplus V_2^{4} =\underline{\Span}\{v_0,\partial_z v_0, \partial_z^2 v_0\};
	 	\end{align*}
	 	hence
	 	\begin{equation}\label{f_0f_1}
	 		\begin{aligned}
	 			\hat\sigma_0 &= P_0 \left(v_{0}\wedge\partial_z v_{0}\right) +P_1\left(v_{0}\wedge \partial_z^{2}v_0\right) + P_2\left(\partial_z v_{0}\wedge \partial_z^{2}v_0\right)
	 		\end{aligned}
	 	\end{equation}
	 	for some polynomials $P_0,P_1,P_2$ in $z$. Consider the local sections $v_1$ and $v_2$ of $V_1^{4}$ and $V_2^{4}$, respectively, defined as in \eqref{veronesesection}. From \eqref{sectionfarmoulaveronese}, we have the following  
	 	\begin{equation}\label{partialV0}
	 		\partial_zv_0=v_1+\frac{4\bar z}{1+z\bar z}v_0,\quad \partial_zv_1=v_2+\frac{2\bar z}{1+z\bar z}v_1.
	 	\end{equation}
	 	By using \eqref{partialV0}, 	after straightforward computation we deduce that
	 	\begin{equation*}\label{V_0}
	 		\partial_z^2 v_{0}=v_2+\frac{6\bar{z}}{(1+z\bar{z})}v_1+ \frac{12 \bar{z}^2}{(1+z\bar{z})^2}v_{0}. 
	 	\end{equation*}
	 	Hence,
	 	\begin{align*}
	 		& v_{0}\wedge\partial_z v_{0} =  v_{0}\wedge v_{1},\quad v_{0}\wedge \partial_z^2 v_0 = \frac{6\bar{z}}{1+z\bar{z}} v_0 \wedge v_1+ v_0 \wedge v_2,
	 		\\ &	\partial_z v_0 \wedge \partial_z^{2} v_{0} = \frac{12\bar{z}^2}{(1+z\bar{z})^2}v_0 \wedge v_1 +\frac{4\bar{z}}{(1+z\bar{z})}v_0 \wedge v_2 + v_1 \wedge v_2.
	 	\end{align*}
	 	Replacing these formulas in \eqref{f_0f_1},  we obtain
	 	\begin{equation*}\label{wedge f_0 f_1}
	 		\hat\sigma_0 = \left(P_0 + \frac{6 \bar{z}P_1}{1+z\bar{z}}+\frac{12 \bar{z}^2P_2}{(1+z\bar{z})^2} \right)v_{0} \wedge v_{1}+ P_{2}\left(v_1 \wedge v_2\right) +\left(P_1+\frac{4\bar{z}P_2}{1+z\bar{z}}\right)v_0 \wedge v_2.
	 	\end{equation*}
	 	Taking the norm square (recall that $v_0,v_1,v_2$ are mutually orthogonal) 
	 	\begin{equation}\begin{aligned}\label{norm}
	 			\left\|\	\hat\sigma_0 \right\|^2 = \left| P_0 + \frac{6 \bar{z}P_1}{1+z\bar{z}}+\frac{12 \bar{z}^2P_2}{(1+z\bar{z})^2}\right|^2&\left\|v_0\right\|^2 \left\|v_1\right\|^2 +\left\|P_2\right\|^2 \left\|v_1\right\|^2\left\|v_2\right\|^2\\ &+ \left|P_1+\frac{4\bar{z}P_2}{1+z\bar{z}}\right|^2\left\|v_2\right\|^2\left\|v_0\right\|^2.\end{aligned}
	 	\end{equation}
	 	From Theorem \ref{veronesebolton} we know that
	 	$$\frac{\|v_1\|^2}{\|v_0\|^2}=\gamma^{4}_0=\frac{4}{(1+z\bar z)^2},\qquad\frac{\|v_2\|^2}{\|v_1\|^2}=\gamma^{4}_1=\frac{6}{(1+z\bar z)^2}.$$
	 	Together with \eqref{norm}, and since $\|v_0\|^2= (1+z\bar z)^4 $, this gives
	 	\begin{equation}\label{metric}
	 		\begin{aligned}
	 			\left\|	\hat\sigma_0 \right\|^2  &=4 \left| P_0\left(1+z\bar{z}\right)^3 + {6 \bar{z}}{(1+z\bar{z})^2}P_1+{12 \bar{z}^2}{(1+z\bar{z})}P_2\right|^2\\ &+ 96 \left|\left(1+z\bar{z}\right)P_2\right|^2+24 \left|{(1+z\bar{z})^2}P_1 + 4\bar{z}\left(1+z\bar{z}\right)P_2\right|^2.
	 		\end{aligned}
	 	\end{equation}
	 	Combining (\ref{constant curvature}) and (\ref{metric}),
	 	\begin{equation}\label{metric in terms of veronese}
	 			\left(1+z\bar{z}\right)^{k-2}=4 \left| P_0\left(1+z\bar{z}\right)^2 + {6 \bar{z}}{(1+z\bar{z})}P_1+{12 \bar{z}^2}P_2\right|^2 +96 \left|P_2\right|^2 + 24 \left|{(1+z\bar{z})}P_1 + 4\bar{z}P_2\right|^2.
	 	\end{equation}
	 	
	 	The case $k=1$ is impossible, because the right-hand side of \eqref{metric in terms of veronese} is polynomial in $z,\bar z$. 
	 		For $k=2$, equation \eqref{metric in terms of veronese} yields
	 	\begin{equation*}
	 			1=4 \left| P_0\left(1+z\bar{z}\right)^2 + {6 \bar{z}}{(1+z\bar{z})}P_1+{12 \bar{z}^2}P_2\right|^2 +96 \left|P_2\right|^2 + 24 \left|{(1+z\bar{z})}P_1 + 4\bar{z}P_2\right|^2.
	 	\end{equation*}
	 	From this we see that the three adding terms on the right-hand side must be bounded. But this occurs if, and only if, all the polynomials $P_0(z)$, $P_1(z)$ and $P_2(z)$ are  zero, giving a contradiction.
	 	
	 Suppose that $k>2$. Recall that $\C[z,\bar{z}]$ is a unique factorization domain and $(1+z\bar{z})$ is irreducible in $\C[z,\bar{z}]$.
	 	Observe that if we expand the right-hand side of \eqref{metric in terms of veronese}, all the terms except $\left|P_2\right|^2\left(96+384 z\bar{z}+576z^2\bar{z}^2\right)$ are divisible by $\left(1+z\bar{z}\right).$
	 	Since, for $k>2$, the factor $\left(1+z\bar{z}\right)$ divides the left-hand side of \eqref{metric in terms of veronese},  it must divide  
	 	$\left|P_2\right|^2\left(96+384 z\bar{z}+576z^2\bar{z}^2\right)$  as well. 	But $P_2$ is a holomorphic polynomial, hence, if $P_2\neq 0$, $\left|P_2\right|^2$ is not divisible by  $(1+z\bar{z})$; moreover, the polynomial  $96+384 z\bar{z}+576z^2\bar{z}^2$ is not divisible by $(1+z\bar{z})$. Hence, we must have $P_2=0$.
	 	In this case, equation \eqref{metric in terms of veronese} simplifies to
	 	\begin{equation}\label{norm2}
	 		\begin{aligned}
	 			\left(1+z\bar{z}\right)^{k-4}=4\left|P_0\left(1+z\bar{z}\right)+6\bar{z}P_1\right|^2+24\left|P_1\right|^2.
	 		\end{aligned}
	 	\end{equation}
	 	From the equation \eqref{norm2}, observe that for $k=3$ the left-hand side is no longer  polynomial, whereas the right-hand is  polynomial. Therefore, $k=3$ is not possible.
	 	
	 	For $k=4$, equation \eqref{norm2} yields
	 $$1= 4\left|P_0\left(1+z\bar{z}\right)+6\bar{z}P_1\right|^2+24\left|P_1\right|^2.$$
	 From this we see that both terms $\left|P_1\right|^2$ and $\left|P_0\left(1+z\bar{z}\right)+6\bar{z}P_1\right|^2$ on the right-hand side must be bounded. But this occurs if, and only if, the polynomials $P_0(z)$ and $P_1(z)$  are both zero, giving a contradiction.
	 	
	 	 For $k>4$ we must have $P_1=0$ by applying the similar arguments as those used to show that $P_2=0$. Consequently,  $\hat \sigma_0= P_0\left(v_0 \wedge v_1\right)$. This implies that $$\psi_0=f_0\oplus f_1=V^{4}_0\oplus V^{4}_1,$$
	 	which leads to the following refinement of the diagram \eqref{diag:G27}
	 	\begin{equation}\label{lotsV}
	 			\begin{tikzpicture}
	 				\node (beta0) at (0, 1.1) {$V_1^4$};
	 				\node (beta1) at (1.1, 1.1) {$V_3^4$};
	 				\node (beta2) at (2.2, 1.1) {$V_4^4$};
	 				\node (alpha0) at (0, 0) {$V_0^4$};
	 				\node (alpha1) at (1.1, 0) {$V_2^4$};
	 				\draw[->] (beta1) -- (beta2);
	 				\draw[->] (beta0) -- (alpha1);
	 				\draw[->] (alpha0) -- (beta0);
	 				\draw[->] (alpha1) -- (beta1);
	 			\end{tikzpicture}.
	 	\end{equation}
	 	Here the first $\partial^{\prime}$-Gauss bundle of $\psi_{0}$ is $V^{4}_2$,
	 	leading to a contradiction, since we are assuming that the first $\partial'$-Gauss bundle of $\psi_0$ has rank 2.

	 	\textbf{Case $p=3$}. Let us consider the case  $f_3=V_2^{3}$. The diagram \eqref{diag:G26} will then take the following form
	 	\begin{equation}\label{diag:G28}
	 			\begin{tikzpicture}
	 				\node (beta0) at (0, 1.1) {$f_1$};
	 				\node (beta1) at (1.1, 1.1) {$V_2^3$};
	 				\node (beta2) at (2.2, 1.1) {$V_3^3$};
	 				\node (alpha0) at (0, 0) {$f_0$};
	 				\node (alpha1) at (1.1, 0) {$f_2$};
	 				\draw[->] (beta0) -- (beta1);
	 				\draw[->] (beta1) -- (beta2);
	 				\draw[->] (beta0) -- (alpha1);
	 				\draw[->] (alpha0) -- (alpha1);
	 				\draw[->] (alpha0) -- (beta0);
	 				\draw[->] (alpha1) -- (beta1);
	 			\end{tikzpicture}.
	 	\end{equation}
	 	Again, by taking the composition of $\psi_0=f_0\oplus f_1$ with the Pl\"{u}cker embedding  of
	 	$G_{2}(\C^5)\hookrightarrow \CP^9$  we obtain  a holomorphic map 
	 	$\sigma_0:S^2\to\CP^{9}$. 
	 	Since $\psi_0$ has constant curvature, then $\sigma_0$ also has constant curvature and it coincides with the Veronese map into $\C P^k$, for some $k\in\{1,\ldots,9 \}$. 
	 	Let $\hat\sigma_0:\C \to \C^{10}$ be a holomorphic local section of $ \sigma_0$, which we assume to be polynomial and  nonvanishing. Then \eqref{constant curvature} holds, that is, 
	 	$	\|\hat\sigma_0\|^2=  \left(1+z\bar{z}\right)^k $ for some  $k\in\{1,\ldots, 9\}$.

	 	Consider the orthoghonal decomposition 
	 	$\C^5=V_0^{3}\oplus V_1^{3}\oplus V_2^{3}\oplus V_3^{3}\oplus\Span\{e\},$
	 	where $e=(0,0,0,0,1)$. Consider the section $v_0=\left(1,\sqrt{3}z,\sqrt{3}z^2,z^3,0\right)$ of $V_0^{3}$, and let $v_1$, $v_2$ and $v_3$ be local sections of $V_1^{3}$, $V_2^{3}$, and $V_3^{3}$, respectively, defined by \eqref{veronesesection}.
	 	
	 	Observe that $\hat\sigma_0$ is a section of 
	 	$ \underline{\Span}\{v_{0}\wedge e, v_{0}\wedge \partial_z v_{0}, \partial_z v_{0} \wedge e\}.$  Hence, 
	 	\begin{equation}\label{s0 wedge s1}
	 		\hat\sigma_0= P_{0}( v_{0}\wedge\partial_z v_{0}) + P_{1} (v_{0}\wedge e) +P_{2} (\partial_z v_{0} \wedge e)
	 	\end{equation} where $P_{0}$, $P_{1}$ and $P_{2}$ are holomorphic polynomials. 
	 	Since, in view of \eqref{sectionfarmoulaveronese}, 
	 	$\partial_z v_0=v_1+\frac{3\bar z}{1+z\bar z}v_0,$
	 	from \eqref{s0 wedge s1} we get
	 	\begin{equation}\label{wedge product}
	 		\hat\sigma_0=P_{0}\left( v_{0}\wedge v_{1}\right)+ \left(P_{1}+\frac{3\bar z}{1+z\bar{z}}P_2\right)v_{0} \wedge e + P_{2}\left(v_{1} \wedge e\right). 
	 	\end{equation}
	 	Taking the norm square,
	 	\begin{equation}\label{norm1}
	 		\left\|\hat\sigma_0\right\|^2= \left|P_{0}\right|^2\left\|v_{0}\right\|^2\left\|v_{1}\right\|^2+ \left|P_{1}+\frac{3\bar z}{1+z\bar{z}}P_2\right|^2\left\|v_{0}\right\|^2+\left|P_{2}\right|^2\left\|v_{1}\right\|^2.
	 	\end{equation} 
	 	For $v_0=\left(1,\sqrt{3}z,\sqrt{3}z^2,z^3,0\right)$,  we have
	 	$\left\|v_0\right\|^2=\left(1+z\bar{z}\right)^3;$
	 	and Theorem \ref{veronesebolton} gives
	 	$$\frac{\left\|v_1\right\|^2}{\left\|v_0\right\|^2}=\gamma_0^3=\frac{3}{(1+z\bar z)^2}.$$
	 	Hence, from \eqref{norm1} we deduce that
	 	$$ \left(1+z\bar{z}\right)^k=3\left|P_{0}\right|^2\left(1+z\bar{z}\right)^4+\left|P_1\left(1+z\bar{z}\right)+3\bar{z}P_2\right|^2\left(1+z\bar{z}\right)+3\left|P_2\right|^2\left(1+z\bar{z}\right)$$
	 	for some $k\in\{1,\ldots,9\}$, hence
	 	\begin{equation}\label{mainequation}
	 		\left(1+z\bar{z}\right)^{k-1}=3\left|P_{0}\right|^2\left(1+z\bar{z}\right)^3+\left|P_1\left(1+z\bar{z}\right)+3\bar{z}P_2\right|^2 +3\left|P_2\right|^2.
	 	\end{equation}

	 	For $k=1$, this equation simplifies to 
	 	$$1=3\left|P_{0}\right|^2\left(1+z\bar{z}\right)^3+\left|P_1\left(1+z\bar{z}\right)+3\bar{z}P_2\right|^2 +3\left|P_2\right|^2.$$
	 	Again, the three terms on the right-hand side must be bounded, and this forces a contradiction. So we have $k\geq 2$.
	 	
	 	For $k\geq 2$, we see from \eqref{mainequation} that $\left(1+z\bar{z}\right)$ must divide $\left|P_2\right|^2$. But $P_2$ is a polynomial in $z$, hence $\left|P_2\right|^2$ is not divisible by $\left(1+z\bar{z}\right)$, unless $P_2=0$. Substituting this in \eqref{mainequation}, we obtain
	 	\begin{equation}\label{mainequation2}
	 		\left(1+z\bar{z}\right)^{k-3}=3\left|P_{0}\right|^2\left(1+z\bar{z}\right)+\left|P_1\right|^2
	 	\end{equation}
	 	From \eqref{mainequation2}, we see that $k\geq 3$, because the right-hand side is polynomial. If $k>3$, then $P_1=0$, with the same argument as above. Thus, 
	 	$\hat \sigma_0=P_0\left(v_0\wedge v_1\right)
	 	$, hence $\psi_0=f_0\oplus f_1=V_0^{3}\oplus V_1^{3}$. This implies that the first $\partial'$-Gauss bundle of $\psi_0$ has rank 1, which is a contradiction with the hypothesis of our theorem.  For $k=3$, equation \eqref{mainequation2} yields
	 	$$ 1=3\left|P_{0}\right|^2\left(1+z\bar{z}\right)+\left|P_1\right|^2.$$ 
	 	Since both adding terms on the right-hand side must be bounded, we see that $P_0=0$ and $P_1$ is a constant polynomial. Replacing $P_0=P_2=0$ in \eqref{s0 wedge s1}, we get $\hat\sigma_0=P_1(v_0\wedge e)$, hence $\psi_0=V_0^3\oplus\Span\{e\}$. In this case, we would have $\psi_1=V_1^3$,
	 	which has rank 1, leading to a contradiction with our assumptions.

	 		\textbf{Case $p=2$}. Let us consider the case   $f_3=V_1^{2}$. The diagram \eqref{diag:G26} will then take the following form
	 		\begin{equation}
	\label{diag:G32}
	 			\begin{tikzpicture}
	 				\node (beta0) at (0, 1.1) {$f_1$};
	 				\node (beta1) at (1.1, 1.1) {$V_1^2$};
	 				\node (beta2) at (2.2, 1.1) {$V_2^2$};
	 				\node (alpha0) at (0, 0) {$f_0$};
	 				\node (alpha1) at (1.1, 0) {$f_2$};
	 				\draw[->] (beta0) -- (beta1);
	 				\draw[->] (beta1) -- (beta2);
	 				\draw[->] (beta0) -- (alpha1);
	 				\draw[->] (alpha0) -- (alpha1);
	 				\draw[->] (alpha0) -- (beta0);
	 				\draw[->] (alpha1) -- (beta1);
	 			\end{tikzpicture}.
	 		\end{equation}
	 		We take the orthogonal decomposition $\C^5=\C^3\oplus \C^2$, with 
	 		$\C^3=V_0^{2}\oplus V_1^{2}\oplus V_2^{2}.$
	 		Since there is no arrow from $f_0$ to $V_1^{2}$, we see that $f_0$ has zero component along $V_0^2$, hence $f_0$ is a holomorphic line bundle in $\C^2$.  Let $g_0:\C\to \C^2$ be a local holomorphic section of  $f_0$,  and  consider the section $v_0=\left(1,\sqrt{2}z,z^2,0,0\right)$ of $V_0^{2}$.  Observe that  $g_0$ is not constant, otherwise the first $\partial'$-Gauss bundle of $\psi_0$ would have rank $1$;  then 
	 		$\C^2=\underline{\Span}\{g_0,\partial_z g_0\}.$
	 		
	 		Use the Pl\"{u}cker embedding  
	 		$G_{2}(\C^5)\hookrightarrow \CP^9$  to obtain  a holomorphic map 
	 		$\sigma_0:S^2\to\CP^{9}$. Let $\hat\sigma_0:\C \to \C^{10}$ be a polynomial nonvanishing  local section of $ \sigma_0$.  Denote by $e$ a constant unit vector spanning the image of $\C^2$ under the Pl\"{u}cker embedding. 
	 			We have
	 		\begin{equation}\label{p2casesigma}
	 			\hat\sigma_0=P_0\left(v_0 \wedge g_0\right)+P_1e
	 		\end{equation}
	 		for some polynomials $P_0$ and $P_1$. 
	 		Since $v_0=\left(1,\sqrt{2}z,z^2,0,0\right)$, then $\left\|v_0\right\|^2=\left(1+z\bar{z}\right)^2$. Taking the norm square in the above equation, we obtain
	 		\begin{equation}\label{norm CP2}
	 			\|\hat\sigma_0\|^2=\left|P_0\right|^2\left\|v_0\right\|^2\left\|g_0\right\|^2+\left|P_1\right|^2=\left(1+z\bar{z}\right)^2\left|P_0\right|^2\left\|g_0\right\|^2+\left|P_1\right|^2.
	 		\end{equation}
	 		On the other hand, since $\psi_0$ has constant curvature,  then $\|\hat\sigma_0\|^2=(1+z\bar z)^k$ for some $k\in\{1,\ldots,9\}$, hence	
	 		\begin{equation}
	 			\label{norm CP2}
	 			\left(1+z\bar{z}\right)^k= \left(1+z\bar{z}\right)^2\left|P_0\right|^2\left\|g_0\right\|^2+\left|P_1\right|^2. 
	 		\end{equation} 
	 		Hence $|P_1|^2$ must be divisible by $(1+z\bar z)$, which implies $P_1=0$.  Replacing this in \eqref{p2casesigma}	gives 	$\hat \sigma_0=P_0\left(v_0 \wedge g_0\right)$, therefore
	 		$\psi_0=f_0\oplus f_1 =f_0\oplus V^{2}_0,$ implying $f_1=V^{2}_0$, and this gives the following refinement of the diagram \eqref{diag:G32}
	 		\begin{equation}
	 			\label{diag:G32a}
	 				\begin{tikzpicture}
	 				\node (beta0) at (0, 1.1) {$V_0^2$};
	 				\node (beta1) at (1.1, 1.1) {$V_1^2$};
	 				\node (beta2) at (2.2, 1.1) {$V_2^2$};
	 				\node (alpha0) at (0, 0) {$f_0$};
	 				\node (alpha1) at (1.1, 0) {$f_2$};
	 				\draw[->] (beta0) -- (beta1);
	 				\draw[->] (beta1) -- (beta2);
	 				\draw[->] (alpha0) -- (alpha1);
	 			\end{tikzpicture}.
	 		\end{equation}
	 		Replacing $P_1=0$ in \eqref{norm CP2} we see that 
	 		$\|g_0\|^2=\frac{(1+z\bar z)^{k-2}}{|P_0|^2},$
	 		which implies, in view of \eqref{metricholocp}, that $f_0$ has constant curvature, hence $f_0=V_0^{1}$, up to unitary congruence. We conclude that
	 		$\Psi= \big(V_{0}^{2} \oplus V_{0}^{1}, V_{1}^{2} \oplus V_{1}^{1}, V_{2}^{2}\big).$
	 		The induced metric from the flag $F_{2,2,1}$ and the corresponding curvature are then given as follows
	 			$\frac{3 \lambda_{0}+2 \lambda_{1}}{\left(1+z\bar{z}\right)^2}dzd\bar z$, and $K(\Psi)=\frac{4}{3 \lambda_{0}+2 \lambda_{1}}$.
	 	\end{proof}

	 	\begin{remark}
	 			Let $\psi_{0}:S^2 \to G_{2}(\C^6)$ be a linearly full holomorphic map whose harmonic sequence $\psi_0,\psi_1,\psi_2,\psi_3$	gives rise to a primitive immersion $\Psi=\left(\psi_{0},\psi_{1},\psi_{2},\psi_{3}\right)$ into $F_{2,2,1,1}$ of constant curvature with respect to  at least  one $\U(6)$-invariant metric on $F_{2,2,1,1}$. The proof of Theorem \ref{mainresult2} can be straightforwardly adapted to conclude that
	 			$\Psi=(V_0^3\oplus V_0^1,V_1^3\oplus V_1^1,V_2^3,V_3^3),$ up to unitary congruence. However an open question remains corresponding the flag $F_{2,2,2}$.
	 	\end{remark}
	 	
	 	\begin{remark}
	 			In \cite[Theorem 4.2]{Jiao2004} and \cite{Jiao2011}, the authors classified all nonsingular holomorphic curves from $S^2$ into $G_2(\C^5)$ with constant curvature. They showed that the possible values for the constant curvature are $4,2,\frac43,1,\frac45.$ Next we compare their results with our Theorem \ref{mainresult2}. It is easy to check that the curves in \cite[Theorem 4.2]{Jiao2004} with constant curvature $K=4$ and $K=2$ are not linearly full. Moreover, the curves with constant curvature $K=\frac45$ and $K=1$ have the first $\partial^{\prime}$-Gauss bundle with  nonconstant curvature; consequently,  by  \cite[Theorem 2.1]{PM}, their primitive lifts are not of constant curvature. We study with more detail the remaining case. According to \cite[Theorem 4.2]{Jiao2004},
	 			any nonsingular holomorphic immersion $\psi_0:S^2 \to G_{2}(\C^5)$ of constant curvature $K=\frac43$ is unitarily congruent to one of the following minimal immersions (locally spanned by $f_0$ and $g_0$, in terms of the canonical complex coordinate $z$ on $\C=S^2\setminus\{0\}$):
	 			
	 			(1)  $f_0(z)=\left(1,0, a\sqrt{3-a^2}z,a^{-1}(a^2-1)^{3/2}z,az^2\right)$ and
	 				$g_0(z) =\left(0,1,0,a^{-1}z,0\right)$,     where $1\le a \le \sqrt{3}$.
	 				Straightforward computations show the following:
	 				$\|g_0 \wedge f_0\|^2= (1+z\bar{z})^3,$
	 				which confirms that	$\psi_0$ has constant curvature $K=\frac43$;  moreover 
	 				$$\|\partial_z g_0 \wedge \partial_z f_0\wedge g_0 \wedge f_0\|^2=3+4z\bar{z}-a^2(1+(-3+a^2)(z\bar{z})^2),$$
	 				which shows that the first $\partial'$-Gauss bundle $\psi_1$  has rank 2, and   
	 				\begin{align*}
	 					\gamma_{1}  = \partial_z \partial_{\bar z}\log\|\partial_z g_0 \wedge \partial_z f_0\wedge g_0 \wedge f_0\|^2= \frac{4(-3+a^2)(-1+a^2z\bar z(-3+a^2-z\bar{z}))}{(-3+a^2-4z\bar{z}+a^2(-3+a^2)(z\bar{z})^2)^2}.
	 				\end{align*}
	 				Hence $\psi_{1}: S^2 \to G_{2}(\C^5)$ is not of constant curvature, but for $a=1$. In this case,
	 				$f_0(z)=\left(1,0, \sqrt{2}z,0,z^2\right)$ and
	 				$g_0(z) =\left(0,1,0,z,0\right)$, hence 	$\psi_0= V_0^2 \oplus V_0^1,$ which agrees with \eqref{primitivemainresult}.

	 		(2)		 $f_0(z)=\left(1,0,az,0,\sqrt{a^4-3a^2+3}z^2\right)$, and $g_0(z)= \Big(0,1,0,\frac{1}{\sqrt{a^4-3a^2+3}}z,\sqrt{\frac{(2-a^2)^3}{a^4-3a^2+3}}\,z\Big),$ where $0\le a \le \sqrt{2}.$ 
	 				Similarly to the previous case, the $\partial^{\prime}$-Gauss bundle $\psi_{1}:S^2\to G_{2}(\C^5)$ has constant curvature if and only if $a=\sqrt{2}.$ This case  corresponds again to $\psi_0= V_0^2 \oplus V_0^1$. 
	 	\end{remark}

	 	Next we consider the finite isotropy order case.  The technique used in \cite{JiaoXu2018,Li99} will play an important role in this proof.
	 	\begin{theorem}\label{mainresult3}
	 		Let $\Psi=\left(\psi_{0}, \psi_{1},\psi_{2}\right):S^2 \to F_{2,2,1}$ be a primitive immersion, where $\psi_{0}: S^2 \to G_{2}(\C^5)$ is a linearly full harmonic map of isotropy order 2;   $\psi_1$ and $\psi_2$ are the 
	 		first and second $\partial '$-Gauss bundles of $\psi_0$. If there exists at least one $\U(5)$-invariant metric on $F_{2,2,1}$ corresponding to which $\Psi$ has constant curvature, then $\Psi$ is unitarily congruent to the curve
	 		\begin{equation}\label{isotropictheorem}
	 		\Psi=\left(V_{0}^{4}\oplus V_{3}^{4}, V_{1}^{4}\oplus V_{4}^{4} ,V_{2}^{4}\right);
	 		\end{equation}
	 		consequently, $\Psi$ has constant curvature with respect to all $\U(5)$-invariant metrics  on $F_{2,2,1}$.
	 		For a $\U(5)$-invariant metric on $F_{2,2,1}$ with parameters $\lambda_0,\lambda_1,\lambda_2$, using the notations of \eqref{eq:metric1}, the  metric  induced by $\Psi$ has  curvature
	 		$K(\Psi)=\frac{2}{2\lambda_{0}+3\lambda_{1}+3\lambda_{2}}.$
	 	\end{theorem}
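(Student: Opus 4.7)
The plan is to extend the cyclic-diagram argument from the finite-isotropy case of Theorem \ref{mainresult1} (which treats $F_{2,1,1}$) to the setting of $F_{2,2,1}$. Since $\psi_0$ has isotropy order exactly $2$, the fundamental harmonic diagram is the $3$-cycle $\psi_0 \to \psi_1 \to \psi_2 \to \psi_0$ with ranks $(2,2,1)$; by nilpotency of cycles over $S^2$ \cite{PW1}, the composition $A'_{\psi_2} \circ A'_{\psi_1} \circ A'_{\psi_0}$ is nilpotent. Note that $\psi_0$ is not holomorphic, so \cite[Theorem 2.1]{PM} cannot be invoked to deduce constant curvature of each $\psi_j$ directly; this information must instead be distilled from $K(\Psi)=\mathrm{const}$ via \eqref{kpsi} and the generalized absolute value type function machinery of Lemma \ref{Absolute function lemma2}.

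The next step is to refine the diagram into five line bundles by writing $\psi_0 = f_0 \oplus f_1$ with $f_0 = \underline{\ker}A'_{\psi_0}$ (a holomorphic subbundle by \cite[Proposition 2.2]{burstall-wood}) and analogously $\psi_1 = g_0 \oplus g_1$ via suitable kernels and images. Combining the nilpotency of the cycle, the impossibility of finite nonzero isotropy order for harmonic maps $S^2 \to \CP^n$ (\cite[Proposition 1.8]{burstall-wood}), and \cite[Proposition 1.6]{burstall-wood} applied to any auxiliary $\CP^n$-valued maps appearing along the way, I would eliminate every arrow configuration on $(f_0, f_1, g_0, g_1, \psi_2)$ except those in which the five line bundles fit together as a single $\partial'$-cycle of length five in $F_{1,1,1,1,1}$ under some cyclic ordering.

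Once this cyclic structure is in place, the metric on $S^2$ induced by $\Psi$ from a $\U(5)$-invariant metric on $F_{2,2,1}$ with parameters $(\lambda_0,\lambda_1,\lambda_2)$ coincides with that induced by the refined lift $\tilde\Psi$ into $F_{1,1,1,1,1}$ for an admissible choice $(\tilde\lambda_0,\ldots,\tilde\lambda_4)$ of positive parameters. Hence $\tilde\Psi$ is a full primitive immersion of constant curvature with respect to at least one $\U(5)$-invariant metric on the full flag, and the corollary from \cite{PM} quoted in the Introduction forces $\tilde\Psi$ to be unitarily congruent to the Veronese primitive immersion $\mathcal{V}^4 = (V_0^4,\ldots,V_4^4)$, up to cyclic shift. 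Among the cyclic groupings of $\mathcal{V}^4$ into blocks of sizes $(2,2,1)$ compatible with the $\partial'$-arrows, the requirement that $\psi_2$ (which must equal the second $\partial'$-Gauss bundle of $\psi_0$) have rank exactly one uniquely selects $\psi_0 = V_0^4 \oplus V_3^4$, $\psi_1 = V_1^4 \oplus V_4^4$, $\psi_2 = V_2^4$. The curvature formula follows at once from Theorem \ref{veronesebolton} and \eqref{kpsi}.

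The main obstacle I anticipate is the combinatorial case analysis during the refinement step: several arrow patterns on the five line bundles are a priori possible, and ruling out each non-cyclic configuration requires a careful combination of nilpotency, the topological tools above, and direct applications of Lemma \ref{Absolute function lemma2} to the $\gamma_j$'s and their ratios, whose generalized absolute value type structure converts the constant curvature condition into integrality constraints on orders of zeros and poles. A secondary subtlety lies in verifying that the parameters $\tilde\lambda_j$ on $F_{1,1,1,1,1}$ induced from the three parameters on $F_{2,2,1}$ are indeed all positive, so that the corollary from \cite{PM} applies without restriction.
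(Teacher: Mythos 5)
Your skeleton --- refine the $3$-cycle into five line bundles, identify the refinement with a primitive map into $F_{1,1,1,1,1}$, and invoke the corollary of \cite{PM} --- matches the paper's, but two of your steps do not work as described. The smaller problem is the splitting of $\psi_0$: since $\psi_1=\underline{\Img}\,A'_{\psi_0}$ has rank $2$, the map $A'_{\psi_0}:\psi_0\to\psi_1$ is generically injective, so $\underline{\ker}A'_{\psi_0}=\{0\}$ and your $f_0$ is the zero bundle. The paper instead splits $\psi_1$ by $\alpha_1=\underline{\ker}A'_{\psi_1}$ (which genuinely has rank $1$ because $\psi_2$ has rank $1$), and transports the complementary line $\beta_1$ back into $\psi_0$ through the antiholomorphic isomorphism $A''_{\psi_1}:\psi_1\to\psi_0$, setting $\beta_0=\underline{\Img}\,A''_{\psi_1}|_{\beta_1}$ and $\alpha_0=\beta_0^{\perp}\cap\psi_0$.

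The serious gap is your claim that nilpotency together with \cite[Propositions 1.6 and 1.8]{burstall-wood} eliminates every arrow pattern except a single $5$-cycle. It does not: the refined diagram in the paper still carries the arrows $\alpha_0\to\beta_0$, $\alpha_1\to\beta_1$ and $\beta_0\to\alpha_1$, and none of these can be excluded by topological or diagram-chasing arguments. What the paper actually establishes diagrammatically is only that $\beta_1$ is linearly full harmonic in $\CP^4$, hence sits in a harmonic sequence $g_0,\dots,g_4$ with $g_1=\beta_1$, $g_2=\beta_2$, $g_3=\alpha_0$, and that the plane $\alpha_1\oplus\beta_0$ coincides with $g_0\oplus g_4$. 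The entire remaining difficulty is to prove that the unitary parameter $u$ relating the frame of $(\alpha_1,\beta_0)$ to that of $(g_4,g_0)$ vanishes. This is done analytically: writing the Maurer--Cartan forms in both frames, showing $|u|$ and $|v|$ are absolute value type functions, computing $\Delta_{S^2}\log|u|^2$ and $\Delta_{S^2}\log|v|^2$ in terms of the osculating norms $\xi_i$ of $g_0$, applying Lemma \ref{Absolute function lemma2} to the generalized absolute value type functions $|u|^2/(\xi_1^{\lambda_1/\lambda_0}\xi_2^{\lambda_2/\lambda_0})$ and $|v|^2/(\xi_0\xi_3\xi_1^{\lambda_1/\lambda_0}\xi_2^{\lambda_2/\lambda_0})$ (not to the $\gamma_j$ and their ratios, which is what you propose), and finally exploiting $|u|^2+|v|^2=1$ and a complexification argument to force the dichotomy $u=0$ or $v=0$, the latter being excluded because it would make the first $\partial'$-Gauss bundle of $\psi_0$ vanish. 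Your proposal replaces this analytic core with an unproven combinatorial assertion, so as written it is not a proof.
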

	 	\begin{proof}
	 		We have the following fundamental harmonic diagram
	 		\begin{equation}\label{second harmonic sequence1}
	 			\begin{tikzpicture}
	 			\node (beta0) at (0, 0) {$\psi_0$};
	 			\node (beta1) at (1.1, 0) {$\psi_1$};
	 			\node (beta2) at (2.2, 0) {$\psi_2$};
	 			\draw[->] (beta0) -- (beta1);
	 			\draw[->] (beta1) -- (beta2);
	 			\draw[->, bend right=30] (beta2) to node[swap] {} (beta0);
	 		\end{tikzpicture}.
	 		\end{equation}
	 		Both $\psi_{0}$ and  $\psi_{1}$ have rank 2, while $\psi_{2}$ has rank 1. 
	 		The induced metric is given by
	 		\begin{equation}\label{flag metric}
	 			\Psi^*ds^2_{F_{2,2,1}}=\eta^2 dzd\bar{z},\quad \mbox{where $ \eta^2=(\lambda_{0}\gamma_{0}+ \lambda_{1} \gamma_{1}+\lambda_{2}\gamma_{2})$}.
	 		\end{equation}
	 		Define $\phi= \eta dz$, so that
	 $\Psi^*ds^2_{F_{2,2,1}}=\phi\bar \phi$.
	 
	 	Set	$\alpha_1 = \underline{\ker}A'_{\psi_1}$, and $\beta_{1} = \alpha_1^{\perp} \cap \psi_{1}$.
	 		Note that $\alpha_{1}$ is a holomorphic vector subbundle of $\psi_{1}$ and  $\beta_1$ is an antiholomorphic vector subbundle of $\psi_{1}$. Set $\beta_{2}=\psi_2$. Since $A''_{\psi_1}$ is an antiholomorphic isomorphism between $\psi_{1}$ and $\psi_{0}$, we can define
	 		$\beta_{0}=\underline{\Img} A''_{\psi_1}|_{\beta_1}$, and $\alpha_0=\beta_{0}^{\perp}\cap\psi_{0}.$ Again, $\alpha_{0}$ is a holomorphic vector subbundle of $\psi_{0}$ and  $\beta_0$ is an antiholomorphic vector subbundle of $\psi_{0}$.  Moreover, due to the topology of $S^2$, we know that the cycle \eqref{second harmonic sequence1} is nilpotent \cite{PW1}, i.e., $c=A'_{\psi_0}\circ A'_{\psi_2}\circ A'_{\psi_1}$ is nilpotent. Hence, we have the following refinement of the diagram (\ref{second harmonic sequence1}): 
	 		\begin{equation}\label{refinement of the diagram}
	 			\begin{tikzpicture}
	 				\node (beta0) at (0, 1.1) {$\beta_0$};
	 				\node (beta1) at (1.1, 1.1) {$\beta_1$};
	 				\node (beta2) at (2.2, 1.1) {$\beta_2$};
	 				\node (alpha0) at (0, 0) {$\alpha_0$};
	 				\node (alpha1) at (1.1, 0) {$\alpha_1$};
	 				\draw[->] (beta0) -- (beta1);
	 				\draw[->] (beta1) -- (beta2);
	 				\draw[->] (beta0) -- (alpha1);
	 				\draw[->] (alpha0) -- (alpha1);
	 				\draw[->] (alpha0) -- (beta0);
	 				\draw[->] (alpha1) -- (beta1);
	 				\draw[->] (beta2) to[out=250, in=-17, looseness=1.25] (alpha0);
	 			\end{tikzpicture}.
	 		\end{equation}
	 		We choose a unitary frame $\{e_{1},\ldots,e_{5}\}$, where $e_{1},e_2,e_3,e_4,e_{5}$  are local sections of  $\alpha_{1}$, $\beta_{1}$, $\beta_{2}$,  $\alpha_{0}$, $\beta_{0}$, respectively.
	 		In  view of \eqref{refinement of the diagram}, we deduce that the Maurer-Cartan $\mathfrak{u}(5)$-valued one-form $W=(\omega_i^j)$ corresponding to the above unitary frame is given as follows
	 	
	 		\begin{equation}\label{MC froms}
	 			\left[
	 			\begin{matrix}
	 				de_1 
	 				\\
	 				de_2
	 				\\
	 				de_3
	 				\\de_4
	 				\\de_5
	 			\end{matrix}
	 			\right] =
	 			\underbrace{	\left[
	 				\begin{matrix} 
	 					\sqrt{-1}\nu_1  & a_{1}^{2}\phi & 0 & b_{1}^{4} \overline{\phi} & b_{1}^{5}\overline{\phi}
	 					\\
	 					-\overline{a_{1}^{2}\phi} & \sqrt{-1}\nu_2  & a_{2}^{3}{\phi} & 0 & b_{2}^{5}\overline{\phi}
	 					\\
	 					0 & -\overline{a_{2}^{3}\phi} & \sqrt{-1}\nu_3  & a_{3}^{4}{\phi} & 0
	 					\\
	 					-\overline{b_{1}^{4}}\phi & 0 & -\overline{a_{3}^{4} {\phi}} & \sqrt{-1}\nu_4  & a_{4}^{5}{\phi}
	 					\\
	 					-\overline{b_{1}^{5}}\phi &-\overline{b_{2}^{5}}\phi & 0 & -\overline{a_{4}^{5}{\phi}} & \sqrt{-1}\nu_5		
	 				\end{matrix}
	 				\right]}_W
	 			\left[
	 			\begin{matrix}
	 				e_1 
	 				\\
	 				e_2
	 				\\
	 				e_3
	 				\\
	 				e_4
	 				\\e_5
	 			\end{matrix}
	 			\right]
	 		\end{equation}
	 		Here $\nu_{i}$ are real valued 1-forms; $a_i^j$ and $b_i^j$ are complex valued functions. Under our assumptions ($\psi_1$ and $\psi_2$ are the 
	 		first and second $\partial '$-Gauss bundles of $\psi_0$, with ranks $2$ and $1$, respectively), all horizontal arrows in the diagram \eqref{refinement of the diagram} are nonzero. The backward arrow from $\beta_2$ to $\alpha_0$ is also nonzero, because $\psi_0$ has finite isotropy order. Hence
	 		the complex functions $b_{1}^{4}$, $a_{2}^{3}$, $a_{3}^{4}$ and $b_{2}^{5}$ are not zero except at isolated points. Since $\Psi^*ds^2_{F_{2,2,1}}=\phi\bar \phi$,  we have the following
	 		normalization
	 		\begin{equation}\label{normalize components}
	 			\lambda_{1}\left|a_{2}^{3}\right|^2 + \lambda_{0}\left|b_{1}^{4}\right|^2 +\lambda_{0}\left|b_{1}^{5}\right|^2+\lambda_{0}\left|b_{2}^{5}\right|^2+\lambda_{2}\left|a_{3}^{4}\right|^2=1.
	 		\end{equation}
	 		
	 		By \cite[Proposition 1.6]{burstall-wood}, $\beta_{1}$ is harmonic. We claim that 
	 		$\beta_1$ is a linearly full harmonic map in $\CP^4$. In fact, suppose that $\beta_1$ takes values in $\CP^3$. In this case, we have $\C^4=\beta_1\oplus \beta_2\oplus \alpha_0\oplus C$ for some line bundle $C$, hence one of the following harmonic diagrams  hold:
	 		\begin{equation*}\label{second harmonic sequence}
	 			\begin{tikzpicture}[>=stealth, auto, node distance=1.1cm, ]
	 				\node (01) {$0$};
	 				\node (beta1) [right of=01] {$\beta_1$};
	 				\node (beta2) [right of=beta1] {$\beta_{2}$};
	 				\node (alpha0) [right of=beta2] {$\alpha_{0}$};
	 				\node (C) [right of=alpha0] {$C$};
	 				\node (02) [right of=C] {$0$};
	 				\draw[->] (beta1) to node {} (beta2);
	 				\draw[->] (beta2) to node {} (alpha0);
	 				\draw[->] (alpha0) to node {} (C);
	 				\draw[->] (C) to node {} (02);
	 				\draw[->] (01) to node {} (beta1);
	 			\end{tikzpicture},
	 			\qquad \quad \begin{tikzpicture}[>=stealth, auto, node distance=1.2cm, ]
	 				\node (01) {$0$};
	 				\node (C) [right of=01] {$C$};
	 				\node (beta1) [right of=C] {$\beta_{1}$};
	 				\node (beta2) [right of=beta1] {$\beta_{2}$};
	 				\node (alpha0) [right of=beta2] {$\alpha_0$};
	 				\node (02) [right of=alpha0] {$0$};
	 				\draw[->] (beta1) to node {} (beta2);
	 				\draw[->] (beta2) to node {} (alpha0);
	 				\draw[->] (alpha0) to node {} (02);
	 				\draw[->] (C) to node {} (beta1);
	 				\draw[->] (01) to node {} (C);
	 			\end{tikzpicture}.
	 		\end{equation*}
	 		Due to dimension constraints, either $\beta_1$ is holomorphic or $\alpha_0$ is antiholomorphic. Both of  cases lead to a contradiction, because all the horizontal arrows in diagram \eqref{refinement of the diagram} are nonzero, thus establishing our claim.  
	 		
	 		Now, since $\beta_1$ is a linearly full harmonic map in $\CP^4$, it belongs to the  harmonic sequence $g_0,g_1,g_2,g_3,g_4$ associated to some full holomorphic map $g_0:S^2\to \CP^4$. Clearly, we must have 
	 		$g_1=\beta_1$, $g_2=\beta_2$, and $g_3=\alpha_0$.  Choose a local
	 		unitary frame $E_1,E_2,E_3,E_4,E_5$ such that $E_j$ spans $g_{j-1}$, with $j\in\{1,2,3,4,,5 \}$, and 
	 		\begin{equation}\label{Relatng e and E}
	 			\left[
	 			\begin{matrix}
	 				e_1 
	 				\\
	 				e_2
	 				\\
	 				e_3
	 				\\e_4
	 				\\e_5
	 			\end{matrix}
	 			\right] =
	 			\underbrace{\left[
	 				\begin{matrix} 
	 					u & 0 & 0 & 0 & v
	 					\\
	 					0 & 1 & 0 & 0 & 0
	 					\\
	 					0 & 0 & 1 & 0 & 0
	 					\\
	 					0 & 0 & 0 &1 & 0
	 					\\
	 					-\bar{v} & 0 & 0 & 0 & \bar{u}
	 				\end{matrix}
	 				\right] }_{A}
	 			\left[
	 			\begin{matrix}
	 				E_1 
	 				\\
	 				E_2
	 				\\
	 				E_3
	 				\\
	 				E_4
	 				\\E_5
	 			\end{matrix}
	 			\right]
	 		\end{equation} 
	 		where $u$ and $v$ are complex functions satisfying $\left|u\right|^2$ +$\left|v\right|^2=1$. 
	 		
	 		We claim that $u=0$ and will prove it later. Assume now  $u=0$. Then we have $e_1=E_5$, $e_5=E_1$, and $e_i=E_j$ for $i,j=2,3,4.$ Hence, diagram \eqref{refinement of the diagram} simplifies to
	 		\begin{equation*}\label{refinement of the diagram1}
	 			\begin{tikzpicture}
	 				\node (g0) at (0, 1.1) {$g_0$};
	 				\node (g1) at (1.1, 1.1) {$g_1$};
	 				\node (g2) at (2.2, 1.1) {$g_2$};
	 				\node (g3) at (0, 0) {$g_3$};
	 				\node (g4) at (1.1, 0) {$g_4$};
	 				\draw[->] (g0) -- (g1);
	 				\draw[->] (g1) -- (g2);
	 				\draw[->] (g3) -- (g4);
	 				\draw[->] (g2) to[out=250, in=-17, looseness=1.25] (g3);
	 			\end{tikzpicture}
	 		\end{equation*} 
	 		with $\psi_0=g_0\oplus g_3$, $\psi_1=g_1\oplus g_4$, and $\psi_2=g_2$. 
	 		Consider the primitive map $\tilde \Psi=(g_0,g_1,g_2,g_3,g_4)$ into $F_{1,1,1,1,1}.$
	 		The metric  $\Psi^*ds^2_{F_{2,2,1}}=( \lambda_{0} \gamma_{0}+ \lambda_{1} \gamma_{1}+
	 		\lambda_{2} \gamma_{2})dzd\bar z,$
	 		coincides with the metric on $S^2$ induced by $\tilde\Psi$ from the  $\U(5)$-invariant metric on $F_{1,1,1,1,1}$ with parameters $\lambda_0,\lambda_1,\lambda_2,\lambda_0,\lambda_3$, regardless the value of $\lambda_3>0$.  Hence, $\tilde \Psi$ has constant curvature, which means,
	 		by \cite[Corollary 2.2]{PM}, that $\tilde\Psi$ is the $4$-Veronese primitive map, up to holomorphic isometry, inducing the metric
	 		$\tilde\Psi^*ds^2_{F_{1,1,1,1,1}}=(\lambda_0\gamma_0^4+\lambda_1\gamma_1^{4} +\lambda_2\gamma_2^{4}+\lambda_0\gamma_3^{4})dzd\bar z,$
	 		where the $\gamma_j^n$, with $j\in\{0,1,2,3 \}$, are defined by \eqref{metric for veronese}, which implies that
	 		$K(\Psi)=K(\tilde \Psi)=
	 		\frac{2}{4\lambda_{0}+3\lambda_{1}+3\lambda_{2}}.$

	 		Now the proof of claim that $u=0$. Since $g_0,g_1,g_2,g_3,g_4$ is a harmonic sequence in $\CP^4$, the Maurer-Cartan $\mathfrak{u}(5)$-valued one-form $\tilde{W}=(\tilde \omega_i^j)$  associated to the unitary moving frame $E_1,E_2,E_3,E_4,E_5$ assumes the standard form
	 	
	 		\begin{equation}\label{MC froms of E}
	 			\left[
	 			\begin{matrix}
	 				dE_1 
	 				\\
	 				dE_2
	 				\\
	 				dE_3
	 				\\dE_4
	 				\\dE_5
	 			\end{matrix}
	 			\right] =
	 			\underbrace{\left[
	 				\begin{matrix} 
	 					\sqrt{-1}\mu_1  & c_{1}^{2}\phi & 0 & 0 & 0
	 					\\
	 					-\overline{c_{1}^{2}\phi} & 	\sqrt{-1}\mu_2 & c_{2}^{3}\phi & 0 & 0
	 					\\
	 					0 & -\overline{c_{2}^{3}\phi} & \sqrt{-1}\mu_3   & c_{3}^{4}{\phi} & 0
	 					\\
	 					0 & 0 &	-\overline{c_{3}^{4}\phi}  & \sqrt{-1}\mu_4  & c_{4}^{5}{\phi}
	 					\\
	 					0 & 0 & 0 & -\overline{c_{4}^{5}{\phi}} & \sqrt{-1}\mu_5
	 				\end{matrix}
	 				\right]}_{\tilde W}
	 			\left[
	 			\begin{matrix}
	 				E_1 
	 				\\
	 				E_2
	 				\\
	 				E_3
	 				\\
	 				E_4
	 				\\E_5
	 			\end{matrix}
	 			\right]
	 		\end{equation}
	 		where $\mu_i$ are real-valued 1 forms, and $c_i^j$ are complex functions satisfying $c_{i}^j=- \overline{c_{j}^i}$. 
	 		
	 		Denote the column matrix $\{e_1, e_2, e_3, e_4, e_5\}$ by $e$ and  $\{E_1, E_2, E_3, E_4, E_5\}$ by $E.$ We have 
	 	$e=A E$, where $A$ is the $\U(5)$-valued function defined by \eqref{Relatng e and E}. Differentiate and apply the definitions of $W$ and $\tilde W$ to obtain
	 		\begin{equation}\label{dA}
	 			dA= WA-A\tilde{W}.
	 		\end{equation}
	 		By comparing the first entry of the first row of equation \eqref{dA}, 
	 		and taking conjugate, we have 
	 		\begin{equation*}
	 			d\overline{u}= \overline{u}\sqrt{-1}\left(\mu_1-\nu_1\right) \mod dz. 
	 		\end{equation*}
	 		Similarly, by comparing the last entry of the first row, and  taking the conjugate, we get
	 		\begin{equation*}
	 			d\overline{v}= \overline{v}\sqrt{-1}\left(\mu_5-\nu_1\right) \mod dz. 
	 		\end{equation*}
	 		Therefore, from   \cite[Lemma 2.1]{Li99}, $\left|u\right|$ and $\left|v\right|$  are  absolute value type functions.
	 		Together with \eqref{MC froms}, \eqref{Relatng e and E} and \eqref{MC froms of E}, 
	 		equation \eqref{dA} also gives:
	 		\begin{equation}\label{relationabc}
	 			\begin{aligned}
	 				 a_{1}^{2}=u c_{1}^{2}, \quad b_{1}^{4}=- v \overline{c_{4}^{5}}, \quad a_{2}^{3}=c_{2}^{3},\quad  b_{2}^5=v \overline{c_{1}^{2}}, \quad a_{3}^{4}= c_{3}^{4},\quad a_{4}^{5}= uc_{4}^{5}
	 			\end{aligned}.
	 		\end{equation}
	 		Since $|u|$ and $|v|$ are absolute value type functions, then by \cite[Lemma 2.1]{Li99}, together with the Maurer-Cartan structure equations for $W$ and $\tilde W$, 
	 		\begin{equation}\label{laplacian of u}
	 			\begin{aligned}
	 				\frac{1}{4}\Delta_{S^2}\log\left|u\right|^2 \phi\wedge\overline{\phi}&= \sqrt{-1}d\left(\mu_1-\nu_1\right)=d\tilde{w}_{1}^{1}-d \omega_{1}^{1}\\&= \tilde{w}_{1}^{2}\wedge \tilde{w}^{1}_{2} -\left(\omega_{1}^{2}\wedge \omega_{2}^{1}+\omega_{1}^{4}\wedge \omega_{4}^{1}+\omega_{1}^{5}\wedge \omega_{5}^{1}\right)\\&=-\left|c_{1}^{2}\right|^2\phi\wedge\overline{\phi} +\left|a_{1}^{2}\right|^2\phi\wedge\overline{\phi}+\left|b_{1}^{4}\right|^2\overline{\phi}\wedge{\phi} +\left|b_{1}^{5}\right|^2\overline{\phi}\wedge{\phi}\\&= \left(-\left|c_{1}^{2}\right|^2+\left|a_{1}^{2}\right|^2-\left|b_{1}^{4}\right|^2 -\left|b_{1}^{5}\right|^2\right)\phi\wedge\overline{\phi}.
	 			\end{aligned}
	 		\end{equation}		
	 		By using \eqref{normalize components}, we have
	 		\begin{equation}\label{b2b2}	-\left|b_{1}^{4}\right|^2 -\left|b_{1}^{5}\right|^2	=	-\frac{1}{\lambda_0}\left(1-\lambda_1|a_2^3|^2-\lambda_0|b_2^5|^2-\lambda_2|a_3^4|^2 \right).
	 		\end{equation}	
	 		Plugging this into 	\eqref{laplacian of u}, using  \eqref{relationabc} and the normalization $|u|^2+|v|^2=1$, we obtain
	 		$$\frac{1}{4}\Delta_{S^2}\log\left|u\right|^2 =\frac{1}{\lambda_0}\left(-1+\lambda_{1}\left|c_{2}^{3}\right|^2+\lambda_{2}\left|c_{3}^{4}\right|^2
	 		\right).$$
	 		Similarly, we compute 
	 		\begin{equation}\label{laplacian of va}
	 			\begin{aligned}
	 				\frac{1}{4}\Delta_{S^2}\log\left|v\right|^2 \phi\wedge\overline{\phi}&= \sqrt{-1}d\left(\mu_5-\nu_1\right)=d\tilde{w}_{5}^{5}-d \omega_{1}^{1}\\&= \tilde{w}_{5}^{4}\wedge \tilde{w}^{5}_{4} -\left(\omega_{1}^{2}\wedge \omega_{2}^{1}+\omega_{1}^{4}\wedge \omega_{4}^{1}+\omega_{1}^{5}\wedge \omega_{5}^{1}\right)\\&=-\left|c_{4}^{5}\right|^2\overline{\phi}\wedge {\phi} +\left|a_{1}^{2}\right|^2\phi\wedge\overline{\phi}+\left|b_{1}^{4}\right|^2\overline{\phi}\wedge{\phi} +\left|b_{1}^{5}\right|^2\overline{\phi}\wedge{\phi}\\&= \left(\left|c_{4}^{5}\right|^2+\left|a_{1}^{2}\right|^2-\left|b_{1}^{4}\right|^2 -\left|b_{1}^{5}\right|^2\right)\phi\wedge\overline{\phi}.
	 			\end{aligned}
	 		\end{equation}
	 		Plugging \eqref{b2b2} into 	\eqref{laplacian of va}, using  \eqref{relationabc} and the normalization $|u|^2+|v|^2=1$, we obtain
	 		\begin{equation}\label{laplacian of v}
	 			\frac{1}{4}\Delta_{S^2}\log\left|v\right|^2 =\left|c_{1}^{2}\right|^2+ \left|c_{4}^{5}\right|^2+\frac{1}{\lambda_0}\left(-1+ \lambda_{1}\left|c_{2}^{3}\right|^2+\lambda_{2}\left|c_{3}^{4}\right|^2\right).
	 		\end{equation} 
	 		
	 		Now, let $\tilde \sigma_i$ be a nonzero holomorphic local section of the $i$-th osculating curve of $g_0:S^2\to \CP^4$, and set $\xi_i= \|\tilde \sigma_i\|^2$. In view of \eqref{metricholocp}, we have, for  $1\le i \le 4$,
	 		$\left|c_{i}^{i+1}\right|^2\phi\overline \phi=\partial_z\partial_{\bar z}\log\xi_{i-1}dz d\bar z.$ 
	 	 Since
	 		$\Delta_{S^2}=\frac{4}{\eta^2}\partial_z\partial_{\bar z}$, 	where $\eta^2$ is the conformal factor defined by \eqref{flag metric}, we get
	 		\begin{equation}\label{ci}
	 			\left|c_{i}^{i+1}\right|^2=\frac14 \Delta_{S^2}\log \xi_{i-1}.
	 		\end{equation}
	 		It follows from \eqref{laplacian of u}, \eqref{laplacian of v} and \eqref{ci} that
	 		\begin{equation}\label{lap}
	 			\begin{aligned}
	 				\frac{1}{4}\Delta_{S^2}\log\frac{\left|u\right|^2}{\xi_{1}^{{\lambda_{1}}/{\lambda_0}}\xi_{2}^{{\lambda_{2}}/{\lambda_0}}}&=-\frac{1}{\lambda_0},\quad 
	 				\frac{1}{4}\Delta_{S^2}\log\frac{\left|v\right|^2}{\xi_{0}\xi_3\xi_{1}^{\lambda_1/\lambda_0}\xi_{2}^{\lambda_{2}/\lambda_0}}&=-\frac{1}{\lambda_0}	\end{aligned}.
	 		\end{equation}
	 		As quotients of generalized absolute value type functions on $S^2$, we know that
	 		$$
	 		\frac{\left|u\right|^2}{\xi_{1}^{{\lambda_{1}}/{\lambda_0}}\xi_{2}^{{\lambda_{2}}/{\lambda_0}}},\quad \frac{\left|v\right|^2}{\xi_{0}\xi_3\xi_{1}^{\lambda_1/\lambda_0}\xi_{2}^{\lambda_{2}/\lambda_0}}$$
	 		are generalized absolute value type functions on $S^2$. Then it follows from   Lemma \ref{Absolute function lemma2} together with \eqref{lap} that
	 		\begin{equation*}
	 				\mathrm{order}\left(\frac{\left|u\right|^2}{\xi_{1}^{{\lambda_{1}}/{\lambda_0}}\xi_{2}^{{\lambda_{2}}/{\lambda_0}}}\right)=\frac{4}{K\lambda_0},\quad  \mathrm{order}\left(\frac{\left|v\right|^2}{\xi_{0}\xi_3\xi_{1}^{\lambda_1/\lambda_0}\xi_{2}^{\lambda_{2}/\lambda_0}}\right)=\frac{4}{K\lambda_0},
	 		\end{equation*}
	 		where $K$ is the constant curvature of $\Psi$;
	 		moreover,
	 		\begin{equation}\label{u+v=1}
	 			\begin{aligned}
	 				\left|u\right|^2&=\frac{\left|g(z)\right|^2}{\left(1+z\bar{z}\right)^{\frac{4}{K\lambda_0}}}\xi_{1}^{{\lambda_{1}}/{\lambda_0}}\xi_{2}^{{\lambda_{2}}/{\lambda_0}},\quad  \left|v\right|^2 &=\frac{\left|h(z)\right|^2}{(1+z\bar{z})^\frac{4}{K\lambda_0}}
	 				\xi_{0}\xi_3\xi_{1}^{\lambda_1/\lambda_0}\xi_{2}^{\lambda_{2}/\lambda_0}\end{aligned}
	 		\end{equation}
	 		where
	 		\begin{equation*}\label{g(z) h(z)}
	 			\begin{aligned}
	 				 \left|g(z)\right|^2=c^2 \frac{\left|z-z_1\right|^{2p_1}\ldots \left|z-z_m\right|^{2p_m}}{\left|z-z_{m+1}\right|^{2q_1}\ldots \left|z-z_{m+k}\right|^{2q_k}},\quad  \left|h(z)\right|^2=d^2 \frac{\left|z-w_1\right|^{2r_1}\ldots \left|z-w_n\right|^{2r_n}}{\left|z-w_{n+1}\right|^{2s_1}\ldots \left|z-w_{n+j}\right|^{2s_j}}
	 			\end{aligned},
	 		\end{equation*} 
	 		with $c,d\in\mathbb{R}$. 

	 		Since $\left|u\right|^2+\left|v\right|^2=1$, equation \eqref{u+v=1} gives
	 		\begin{equation}\label{equation in beta}
	 			(1+z\bar{z})^\frac{4}{K\lambda_0}=\xi_{1}^{{\lambda_{1}}/{\lambda_0}}\xi_{2}^{{\lambda_{2}}/{\lambda_0}}\left(\left|g(z)\right|^2+ \left|h(z)\right|^2 \xi_{0} \xi_{3}\right).
	 		\end{equation}
	 			Since all (nonconstant) polynomials $\xi_j(z,\bar z)$, with $j\in\{0,\ldots,p-1\}$, and the factor $1+z\bar z$ are nonvanishing, the functions
	 		$z\mapsto \xi_j^{\alpha}(z,\bar z)$ and $z\mapsto  (1+z\bar z)^\alpha$
	 		are real analytic on $\mathbb{C}$, even when  $\alpha$ is not an integer. The corresponding complexifications are given, respectively, by 
	 		$(z,w)\mapsto \xi_j^{\alpha}(z,w)$ and $(z,w)\mapsto  (1+zw)^\alpha,$
	 		considering the principal branch of each multivalued function $Z\mapsto Z^\alpha$. 
	 		Now, take the complexifications of both sides of  \eqref{equation in beta};
	 		by uniqueness of complexification,
	 		we have 	

	 			\begin{equation*}\label{equation in betazw}
	 				\left(1+zw \right)^{\frac{4}{K\lambda_0}}=
	 				\xi_{1}^{\lambda_{1}/\lambda_0}(z,w)\xi_{2}^{\lambda_{2}/\lambda_0}(z,w)
	 				\left(g(z)\overline g(w) +
	 			h(z)\overline h(w) \xi_{0}(z,w) \xi_{3}(z,w)\right).
	 		\end{equation*}
	 		Hence, if not empty, the zero set of the polynomial $\xi_i(z,w)$, with $i\in\{1,2\}$, coincides in $\C^2$ with the zero set of the irreducible polynomial $1+zw$.  Thus, for some positive integers $\alpha_1, \alpha_2$ and positive real numbers $c_1,c_2$,  we have
	 		$\xi_1(z,\bar z) =c_1\left(1+z\bar{z}\right)^{\alpha_{1}}$, and $\xi_2(z,\bar z) =c_2\left(1+z\bar{z}\right)^{\alpha_{2}}.$
	 		In view of  \eqref{metricgamma} and \eqref{curvaturepsi}, this indicates that the harmonic map $g_2:S^2\to \CP^4$ is an immersion of  constant curvature, hence, by \cite[Theorem 5.4]{Bolton}, the harmonic sequence $g_0,g_1,g_2,g_3,g_4$ is the $4$-Veronese sequence, up to isometry. 
	 		Consequently, all the $\xi_{i}(z,\bar z)$, with $i=0,1,2,3$, are of the form $c_i(1+z\bar z)^{\alpha_i}$. Thus,  equation \eqref{equation in beta}  becomes
	 		\begin{equation}\label{equation in z zbar}
	 			\left(1+z\bar{z}\right)^{\alpha}=C_1\left(\left|g(z)\right|^2+C_2\left|h(z)\right|^2\left(1+z\bar{z}\right)^{\alpha'}\right),
	 		\end{equation}
	 		where  $\alpha=\frac{4}{K\lambda_0}-\frac{\lambda_1\alpha_1+\lambda_2\alpha_2}{\lambda_0}$, $\alpha'=\alpha_0+\alpha_3$ and $C_1,C_2$ are positive real numbers.

	 		Now, suppose that $\alpha > 0$. Complexifying 
	 		both sides of  \eqref{equation in z zbar}, we obtain an 
	 		equality between two analytic functions in the independent complex variables $z,w$. The left-hand side will vanish whenever $w=-\frac{1}{z}$;  then, in view of the general form of $|g(z)|^2$, equality
	 		\eqref{equation in z zbar} can occur only if
	 		$g(z)=0$. This implies that $u=0$, and we are done. 
	 		
	 		Suppose that $\alpha \leq 0$. From \eqref{equation in z zbar} we obtain
	 		$$1=C_1\big(\left|g(z)\right|^2(1+z\bar z)^{-\alpha}+C_2\left|h(z)\right|^2\left(1+z\bar{z}\right)^{\alpha'-\alpha}\big).$$
	 		This implies that $\left|h(z)\right|^2\left(1+z\bar{z}\right)^{\alpha'-\alpha}$ is bounded, which occurs if, and only if, $h(z)=0$ (hence $v=0$), because $\alpha'-\alpha>0$. 
	 		Then diagram \eqref{refinement of the diagram} becomes
	 		\begin{equation*}\label{refinement of the diagram1}
	 			\begin{tikzpicture}
	 				\node (g4) at (0, 1.1) {$g_4$};
	 				\node (g1) at (1.1, 1.1) {$g_1$};
	 				\node (g2) at (2.2, 1.1) {$g_2$};
	 				\node (g3) at (0, 0) {$g_3$};
	 				\node (g0) at (1.1, 0) {$g_0$};
	 				\draw[->] (g0) -- (g1);
	 				\draw[->] (g1) -- (g2);
	 			
	 				\draw[->] (g3) -- (g4);
	 			
	 				\draw[->, >=stealth,] (g2) to[out=250, in=-15, looseness=1.25] (g3);
	 			\end{tikzpicture}.
	 		\end{equation*}
	 		This leads to a contradiction with the assumption that $\psi_0, \psi_{1}, \psi_{2}$ is the harmonic sequence generated by $\psi_{0}$; in fact, if this last diagram was valid, then {$\partial'$-Gauss bundle} of $\psi_{0}=g_3\oplus g_4$ would be zero. 
	 	\end{proof}

	 	\begin{remark}\label{not harmonic sequence}
	 	A careful inspection of the proofs of Theorem \ref{mainresult2} and Theorem \ref{mainresult3} shows that, besides \eqref{primitivemainresult} and \eqref{isotropictheorem}, any other primitive immersion $\Psi:S^2\to F_{2,2,1}$ of constant curvature with respect to at least one $\U(5)$-invariant metric on $F_{2,2,1}$ can be obtained by operating with homogeneous projections,  adding constants, adding primitive maps,  and shifts on primitive immersions  of constant curvature into other flag manifolds ($F_{1,1}=\CP^1$, $F_{2,1}$, $F_{1,1,1}$, $F_{2,2}=G_2(\C^4)$, $F_{2,1,1}$, $F_{1,1,1,1}$, $F_{1,1,1,1,1}$)  which are primitive lifts of  linearly full harmonic maps. For example, \eqref{lotsV} corresponds to the primitive immersion $\Psi_1:S^2\to  F_{2,2,1}$ of constant curvature given by
	 	$\Psi_1=\left(V_0^4\oplus V_1^4,V_2^4\oplus V_3^4,V_4^4\right).$ This is not a primitive lift of $\psi_0=V_0^4\oplus V_1^4$, in the sense that $V_2^4\oplus V_3^4$ is not the first $\partial'$-Gauss bundle of $\psi_0$.
	 	However, we have $\Psi_1=\Pi\circ \tilde\Psi_1$ with  $ \tilde\Psi_1:S^2\to F_{1,1,1,1,1}$
	 	given by
	 	$\tilde \Psi_1=\left(V_0^4, V_1^4,V_2^4, V_3^4,V_4^4\right).$
	 	For another example, consider $f_0$ constant in \eqref{diag:G32a}. This case corresponds to  the primitive immersion $\Psi_2:S^2\to  F_{2,2,1}$ of constant curvature given by
	 	$\Psi_2=\left(V_0^2\oplus \C,V_1^2\oplus \C,V_2^2\right),$
	 	which is obtained  from $\tilde \Psi_2:S^2\to F_{1,1,1}$ given by $\tilde \Psi_2=(V_0^2,V_1^2,V_2^2)$
	 	by adding constants.
	 \end{remark}

\end{document}